\DeclareMathOperator{\ric}{\mbox{Ric}}
\numberwithin{equation}{section}
\definecolor{qqwuqq}{rgb}{0,0,0}
\begin{document}

\title[Yamabe flow and bounded geometry]{Normalized Yamabe flow on 
manifolds with bounded geometry}

\author{Bruno Caldeira}
\address{Universidade Federal de S\~{a}o Carlos, Brazil}
\email{brunoccarlotti@gmail.com,brunocarlotti@estudante.ufscar.br}

\author{Luiz Hartmann}
\address{Universidade Federal de S\~{a}o Carlos, Brazil}
\email{hartmann@dm.ufscar.br,luizhartmann@ufscar.br}

\author{Boris Vertman}
\address{Universit\"{a}t Oldenburg, Germany}
\email{vertman@uni-oldenburg.de}

\subjclass[2020]{Primary 53C44 ; Secondary 58J35, 35K08}
\keywords{Yamabe flow, bounded geometry, Schauder estimates}

\thanks{Bruno Caldeira  is partially supported by the Coordenação de Aperfeiçoamento de Pessoal de Nível Superior - Brasil (CAPES) - Finance Code 001 (Process 
	88881.199666/2018-01) and 
	Luiz Hartmann is partially supported by FAPESP:2018/23202-1 and FAPESP:2021/09534-4.}

\maketitle
\begin{abstract}
The goal of this paper is to study Yamabe flow on 
a complete Riemannian manifold of bounded geometry with possibly infinite volume.
In case of infinite volume, standard volume normalization of the Yamabe flow fails and the flow
may not converge. Instead, we consider a curvature normalized Yamabe flow, and 
assuming negative scalar curvature, prove its long-time existence and 
convergence. This extends the results of Su\'arez-Serrato and Tapie to a non-compact setting.
In the appendix we specify our analysis to a particular example of manifolds with bounded geometry, namely manifolds with fibered boundary metric.  
In this case we obtain stronger estimates for the short time solution using microlocal methods.   
\end{abstract}

\tableofcontents

\section{Introduction and statement of the main results} \label{introduction}

The Yamabe flow equation is an evolution equation introduced by Hamilton \cite{hamilton} 
as an alternative ansatz to solving the Yamabe conjecture.
The conjecture, posed by Yamabe \cite{Yamabe} and proved by Trudinger \cite{Trudinger}, Aubin \cite{Aubin} and Schoen \cite{Schoen}, states that for any compact, smooth Riemannian manifold $(M, g)$ without boundary there exists a constant scalar curvature metric, conformal to $g$. Their proofs are based on the calculus of variations and elliptic theory.  
\medskip

Hamilton proposed a new approach using parabolic methods. More precisely, his Yamabe flow is a parabolic evolution equation with solution 
given by a family of Riemannian metrics $\{g(t)\}_{t \in [0,T)}$ on $M$ such that 
\begin{equation}\label{yamabeflow}
    \partial_{t}g(t) = -\scal(g(t))g(t); \;\; g(0) = g.
\end{equation}
On compact Riemannian manifolds the flow shrinks the metric on regions with $\scal(g(t)) > 0$.  In particular a sphere collapses along the flow to a point in finite time. For this reason one introduces a 
volume normalized Yamabe flow, using the average scalar curvature
\begin{equation} \label{rhocompact}
    \rho(t)  = \dfrac{1}{\vol_{g(t)}(M)}\int_{M}\scal(g(t))\dvol(g(t)).
\end{equation}
The volume normalized Yamabe flow is then defined by
\begin{equation}
    \partial_{t}g(t) = (\rho(t)-\scal(g(t)))g(t), \;\; g(0) = g.
\end{equation}

The flow is by now well understood in the setting of compact manifolds. Hamilton \cite{hamilton} himself proved 
long time existence of the volume normalized flow for any choice of initial metric.  
Later, Ye \cite{ye} proved convergence of the flow for scalar negative, scalar flat and locally conformal 
flat scalar positive metrics. The case of metrics that are not conformally flat has been studied 
in a series of papers by Schwetlick and Struwe \cite{SS} and later by Brendle \cite{Brendle, BrendleYF}.
\medskip

In this paper we are concerned with the Yamabe conjecture and more specifically the Yamabe flow on 
non-compact complete manifolds. Note that in case of infinite volume, it is not possible to use \eqref{rhocompact} to construct normalization of the Yamabe flow via average scalar curvature.  Thus, only the unnormalized Yamabe flow has been considered in the literature references below.  \medskip

The Yamabe problem in the non-compact setting has been attacked by elliptic methods by several authors.
Aviles and McOwen \cite{alvesowen} have shown that, under decay assumptions on the scalar curvature and lower bounds for the Ricci curvature, there is a metric in the conformal class with constant positive scalar curvature. Grosse \cite{nadine} has proven that there is a metric with positive constant scalar curvature within the conformal classes of metrics with bounded geometry.  
Wei \cite{wei} studied positive solutions of the Yamabe equations under conditions on the Yamabe invariant and 
volume growth on geodesic balls. 
\medskip

The Yamabe flow in the non-compact setting has been studied on asymptotically conical surfaces by Isenberg, Mazzeo and Sesum 
\cite{IMS}, who proved locally uniform convergence of the Ricci flow -- which equals the Yamabe flow on surfaces -- of a time-rescaled metric to a complete hyperbolic metric with finite area. Ma and Cheng \cite{macheng} have studied the Yamabe flow on complete manifolds assuming a Ricci pinching condition for the initial metric.  Ma, Cheng and Zhu \cite{mcz} have also
studied long-time existence of the Yamabe flow, under some $L^p$ conditions on the scalar curvature.  Within the context of the Yamabe flow on non-compact manifolds, we should also mention the work by Ma \cite{lima19} on conditions for the existence of a metric with constant scalar curvature.  Also in the context of non-compact manifolds, Schulz \cite{schulzpaper} proved global existence of the Yamabe flow with unbounded initial curvature,
provided the metric is conformally equivalent to a complete metric with bounded, non-positive scalar curvature and positive Yamabe invariant. A recent work Ma \cite{lima} establishes global existence of the Yamabe flow on non-compact manifolds that are asymptotically flat near infinity. \medskip

In all of these works, convergence of the flow is out of reach, since \eqref{rhocompact} is not defined and thus
only the unnormalized Yamabe flow has been considered. In this paper we study a different type of normalization 
for the Yamabe flow, that allows to study convergence in the non-compact setting as well.
We use the concepts of decreasing and increasing curvature-normalized flows, denoted by $\mbox{CYF}^{-}$ and $\mbox{CYF}^{+}$ respectively, 
as introduced by Su\'{a}rez-Serrato and Tapie \cite{serratotapie} for compact manifolds
\begin{equation}\label{CYF}
\begin{split}
    \partial_{t}g(t) = (\sup_M \scal(g(t)) -\scal(g(t)))g(t), \;\; g(0) = g, \qquad \left( \mbox{CYF}^{+} \right), \\
     \partial_{t}g(t) = (\inf_M \scal(g(t)) -\scal(g(t)))g(t), \;\; g(0) = g, \qquad \left( \mbox{CYF}^{-} \right).
\end{split}
\end{equation}
We study such curvature 
normalized flows in the setting of manifolds with bounded geometry.

\subsection{Yamabe flow for the conformal factor}
The flow preserves the conformal class of the metric and can be written as a scalar evolution equation 
for the conformal factor. More precisely, assume $m:= \dim M \geq 3$ and set $\eta := (m-2)/4$.
Writing $g(t) = u(t)^{1/\eta} g$, the scalar curvature of $g(t)$ can be computed by
($\Delta$ is the negative Laplace Beltrami operator of $(M,g)$)
\begin{equation} \label{scal-trafo}
    \scal(g(t)) = -u(t)^{-(1+1/\eta)}\left[\dfrac{m-1}{\eta}\Delta u(t) - \scal(g) u(t) \right].
\end{equation} 
In view of this relation, the Yamabe flow \eqref{yamabeflow} turns into
\begin{equation} \label{yf-u}
\begin{split}
    &\partial_{t} u(t)^{(m+2)/(m-2)} = \dfrac{m+2}{m-2}\Bigl( (m-1)\Delta u(t) - \eta\scal(g)u(t)\Bigr)
   \\ \Leftrightarrow  \quad &\partial_{t}u(t) = (m-1)u(t)^{-1/\eta}\Delta u(t) - \eta\scal(g)u(t)^{1-1/\eta},
    \end{split}
\end{equation}
with the initial condition $u(t=0)=1$.

\subsection{Normalized Yamabe flows for the conformal factor}
Similar computations as those leading to \eqref{yf-u}
yield the following scalar evolution equation for the conformal factor under the
volume normalized Yamabe flow
  \begin{equation} 
        \partial_{t}u(t) - (m-1)u(t)^{-1/\eta}\Delta u(t) = \eta \Bigl(\rho(t) u(t) - \scal(g)u(t)^{1-1/\eta}\Bigr).
   \end{equation}
The curvature normalized flows in \eqref{CYF} are similarly given by
  \begin{equation}\label{CYF-u}
  \begin{split}
        \partial_{t}u(t) &- (m-1)u(t)^{-1/\eta}\Delta u(t) \\ 
        &= \eta \Bigl(\sup_M \scal(g(t)) \cdot u(t) - \scal(g)u(t)^{1-1/\eta}\Bigr), \qquad \left( \mbox{CYF}^{+} \right), \\
        \partial_{t}u(t) &- (m-1)u(t)^{-1/\eta}\Delta u(t) \\ 
        &= \eta \Bigl(\inf_M \scal(g(t)) \cdot u(t) - \scal(g)u(t)^{1-1/\eta}\Bigr), \qquad \left( \mbox{CYF}^{-} \right).
            \end{split}
   \end{equation}
\subsection{Outline of the paper and main results}

In \S \ref{bdedgeo}, we review the geometry of manifolds with bounded geometry and introduce 
the corresponding family of H\"older spaces $C^{k,\alpha}(M)$. In \S \ref{omori} we employ the Omori-Yau maximum
principle to establish the uniqueness of solutions to the Yamabe flow within $C^{2,\alpha}(M)$ and derive differential inequalities
for the solution, that will later be used for the a priori estimates. \medskip

In \S \ref{parabolic-estimates} we study some parabolic Schauder estimates for the inhomogeneous heat-type equation 
$(\partial_t - a\cdot \Delta)u = f$.  Based on such mapping properties, in \S
\ref{shorttime} we establish in Theorem \ref{theoremshorttime} the short time existence of the (unnormalized) 
Yamabe flow \eqref{yamabeflow} within the class of 
manifolds with bounded geometry. \medskip

In \S \ref{renormalization}, we turn to the increasing curvature normalized Yamabe flow ($\mbox{CYF}^{+}$), introduced in \eqref{CYF-u}, whose short-time existence follows from Theorem \ref{theoremshorttime} by some time rescaling. The same holds also for the decreasing curvature normalized Yamabe flow 
$\mbox{CYF}^{-}$ by a verbatim repetition of the arguments and hence we only write the proofs for $\mbox{CYF}^{+}$. \medskip

In \S \ref{evolutionsection}, we study the evolution of $\scal(g)$ along the $\mbox{CYF}^{+}$. In \S \ref{uniformsection} we derive a priori estimates for solutions of the increasing curvature normalized Yamabe flow. These a priori estimates allow us
to apply the machinery obtained in \S \ref{parabolic-estimates} to conclude
the global existence of $\mbox{CYF}^{+}$ on manifolds with bounded geometry in \S \ref{longtimesection}. This is 
our first main result that we state here. 

\begin{thm} \label{longtime}
Let $(M,g_{0})$ be a manifold with bounded geometry of dimension $m\geq 3$ with negative scalar curvature $\scal(g_{0}) \in C^{k,\alpha}(M)$, 
uniformly bounded away from zero and $k\geq 4$. Then the increasing (or decreasing) curvature normalized Yamabe flow $\mbox{CYF}^{\pm}$ (see \Eqref{CYF-u}) 
admits a global solution $g = u^{4/(m-2)}g_{0}$ for some $u \in C^{k,\alpha}(M\times \mathbb{R}_{+})$.
\end{thm}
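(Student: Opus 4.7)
My strategy is a standard continuation argument built on top of the short-time theorem. Combining Theorem \ref{thm1} with the time rescaling observation of \S\ref{renormalization}, $\mbox{CYF}^{+}$ admits a solution $u \in C^{k,\alpha}_{\Phi}(M\times[0,T_{\max}))$ on a maximal interval with $T_{\max} \in (0,\infty]$. To conclude $T_{\max}=\infty$, I would assume $T_{\max}<\infty$ for contradiction, produce uniform a priori estimates up to $T_{\max}$, and then re-apply Theorem \ref{thm1} at the slice $t=T_{\max}$ to extend the flow past it, contradicting maximality.

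The heart of the proof lies in the a priori estimates assembled in \S\ref{uniformsection}. Starting from the evolution equation for $R(t):=\scal(g(t))$ along $\mbox{CYF}^{+}$ derived in \S\ref{evolutionsection}, I would apply the Omori-Yau maximum principle from \S\ref{omori}, both to $R$ and to $-R$, to conclude that $R(t)$ stays negative and uniformly bounded away from zero on $[0,T_{\max})$, with bounds depending only on $\scal(g_{\Phi})$ — this is exactly where the hypothesis that $\scal(g_{\Phi})$ is negative and uniformly bounded away from zero enters. With $R$, and hence the driving term $\sup_M R(t)$, under control, the reaction term in \eqref{CYF-u} is pointwise bounded, and a further application of the Omori-Yau principle to $u$ and $u^{-1}$ (via the differential inequalities of \S\ref{omori}) should yield
\begin{equation*}
0 < c_1 \leq u(x,t) \leq c_2 < \infty, \qquad (x,t) \in M\times[0,T_{\max}),
\end{equation*}
with $c_1, c_2$ independent of the time horizon. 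Once $u$ and $u^{-1}$ are two-sided uniformly controlled, \eqref{CYF-u} becomes uniformly parabolic on $M\times[0,T_{\max})$; the parabolic Schauder theory in the $\Phi$-H\"older scale reviewed in \S\ref{parabolic-estimates} then bootstraps the $C^0$ bounds to uniform $C^{k,\alpha}_{\Phi}$ bounds on $u$. The limit metric $g(T_{\max}) = u(T_{\max})^{1/\eta} g_{\Phi}$ is again a $\Phi$-metric whose scalar curvature lies in $C^{k-1,\alpha}_{\Phi}(M)$, so Theorem \ref{thm1} extends the flow past $T_{\max}$ and the desired contradiction follows.

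The hardest part, I expect, will be the $C^0$ control on $u$ and $u^{-1}$ in the non-compact setting. On a compact manifold these bounds would follow routinely from the classical maximum principle, but here the extrema of the relevant quantities need not be attained, so I must invoke the Omori-Yau principle for $(M,g(t))$ at each $t$, uniformly in $t$. Verifying its geometric hypotheses throughout the flow will rely on the $\Phi$-structure being preserved with uniform bounds on the geometry, and the negativity together with the uniform lower bound on $\scal(g_{\Phi})$ are precisely what pin down the sign of the reaction term and prevent $u$ from degenerating toward $0$ or $\infty$ through the non-compact ends. Controlling the non-local, time-dependent quantity $\sup_M R(t)$ that drives $\mbox{CYF}^{+}$, rather than a pointwise attained maximum, is the subtlest point and the reason the normalized flow requires a treatment genuinely beyond that of the unnormalized one.
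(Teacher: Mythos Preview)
Your overall strategy matches the paper's exactly: continuation argument, Omori--Yau based a priori $C^0$ estimates on $u$ (\S\ref{uniformsection}), Krylov--Safonov and parabolic Schauder bootstrap (\S\ref{parabolic-estimates}), then restart at the maximal time. The a priori estimate portion of your outline is correct and is essentially the paper's argument.

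There is, however, one genuine gap in your restart step. You assert that $g(T_{\max}) = u(T_{\max})^{1/\eta} g_\Phi$ is again a $\Phi$-metric, so that Theorem~\ref{thm1} applies directly. But as the paper points out in \S\ref{phimanifold}, a generic element of $[g_\Phi]$ is \emph{not} a $\Phi$-metric in the sense of Definition~\ref{phimanifolds}: the conformal factor $u(T_{\max})$ lies only in $C^{k,\alpha}_\Phi(M)$ and need not admit the asymptotic structure at $\partial M$ that the heat-kernel construction underlying Theorem~\ref{thm1} requires. The short-time existence of \S\ref{shorttime} is built on the parametrix $H$ for $(\partial_t - \Delta_\Phi)$ with initial condition $u(0)=1$; restarting at $u_0 := u(T_{\max})$ linearizes instead to $(\partial_t - (m-1)u_0^{-1/\eta}\Delta_\Phi)$, a variable-coefficient operator for which the microlocal parametrix of \cite{giuseppebruno} is not available when the coefficient is merely $C^\alpha_\Phi$ (the paper says so explicitly at the start of \S\ref{parabolic-estimates}). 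The paper closes this gap with Proposition~\ref{Q-mapping-properties}, which establishes the needed mapping properties for the solution operator of $(\partial_t - a\Delta_\Phi)$ non-constructively, via classical parabolic theory localized to the $\Phi$-coordinate balls of \S\ref{parabolic-estimates}. That proposition is the extra ingredient your outline is missing; without it the continuation step does not go through.
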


Finally, in \S \ref{section-convergence} we establish the convergence for the $\mbox{CYF}^{\pm}$,
which proves the Yamabe conjecture on negatively curved manifolds with bounded geometry.
Our result, see Theorem \ref{convergencelong} for the precise statement, reads as follows.

\begin{thm}\label{convergence}
Let $(M,g_{0})$ be a manifold with bounded geometry of dimension $m\geq 3$ 
such that $\scal(g_{0}) \in C^{4,\alpha}(M)$ is negative and 
uniformly bounded away from zero. Then the increasing (or decreasing) curvature normalized Yamabe flow $\mbox{CYF}^{\pm}$
converges to a Riemannian metric $g^{*}$ conformal to $g_{0}$ 
with constant negative scalar curvature.
\end{thm}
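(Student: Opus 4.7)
The plan is to combine the global existence from Theorem \ref{longtime} with a curvature-oscillation argument patterned on the compact case of \cite{serratotapie}, using the Omori-Yau machinery of \S\ref{omori} in place of the ordinary maximum principle. A standard conformal variation computation applied to $\partial_t g = (\rho(t) - R(t))\,g(t)$, where $\rho(t) := \sup_M R(t,\cdot)$ and $R(t) := \scal(g(t))$, yields the evolution
\begin{equation*}
\partial_t R = (m-1)\,\Delta_{g(t)} R + R\,(R - \rho).
\end{equation*}
At points where $R(t,\cdot)$ approaches its supremum one has $\Delta R \leq 0$ and $R - \rho \to 0$, while near the infimum $\Delta R \geq 0$, $R - \rho \leq 0$ and $R \leq 0$; feeding these into the Omori-Yau principle gives, in the Dini derivative sense, $\tfrac{d}{dt}\rho(t) \leq 0$ and $\tfrac{d}{dt} R_{\min}(t) \geq R_{\min}(t)\,(R_{\min}(t) - \rho(t))$, where $R_{\min}(t) := \inf_M R(t,\cdot)$. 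Thus $\rho$ is nonincreasing and $R_{\min}$ nondecreasing along the flow.

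Next I would exploit the hypothesis that $\scal(g_\Phi)$ is negative, bounded, and uniformly bounded away from zero to fix constants $0 < c \leq C$ with $-C \leq \scal(g_\Phi) \leq -c$. The monotonicity of $\rho$ and $R_{\min}$ propagates these bounds: $R(t,\cdot) \in [-C, -c]$ for all $t \geq 0$, and in particular $|R_{\min}(t)| \geq c$. Writing $\omega(t) := \rho(t) - R_{\min}(t) \geq 0$ and rearranging the inequality for $R_{\min}$ as $\tfrac{d}{dt} R_{\min}(t) \geq c\,\omega(t)$, integration over $[0,\infty)$ gives
\begin{equation*}
c \int_0^\infty \omega(s)\,ds \;\leq\; \lim_{t\to\infty} R_{\min}(t) - R_{\min}(0) \;\leq\; C - c \;<\; \infty.
\end{equation*}
Since $\omega$ is itself nonincreasing (as the difference of a nonincreasing and a nondecreasing function), this integrability forces $\omega(t) \to 0$. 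Hence $\rho(t)$ and $R_{\min}(t)$ share a common negative limit $R^\infty$, and $R(t,\cdot) \to R^\infty$ uniformly on $M$.

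Finally, since $g(t) = u(t)^{1/\eta} g_\Phi$ with $u(0) \equiv 1$, the flow $\partial_t g = (\rho - R)\,g$ is pointwise in $x$ and forces $\partial_t u = \eta\,(\rho(t) - R(t,\cdot))\,u$, so
\begin{equation*}
u(t,x) = \exp\Bigl( \eta \int_0^t (\rho(s) - R(s,x))\,ds \Bigr).
\end{equation*}
The bound $0 \leq \rho(s) - R(s,x) \leq \omega(s)$ combined with $\int_0^\infty \omega(s)\,ds < \infty$ gives uniform convergence $u(t) \to u^*$ in $C^0(M)$, with $u^*$ bounded between two positive constants. To upgrade this to a topology in which the scalar curvature is continuous, I would invoke the uniform $C^{k,\alpha}_\Phi$ bounds on $u$ underlying Theorem \ref{longtime} and interpolate them against the $C^0$ convergence to obtain $u(t) \to u^*$ in $C^{2,\alpha'}_\Phi$ for some $\alpha' < \alpha$; passing to the limit in \eqref{scal-trafo} then identifies $\scal(g^*) = R^\infty$ for the conformal limit $g^* := (u^*)^{1/\eta} g_\Phi$, which is therefore a negative constant. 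The main obstacle I anticipate is the rigorous implementation of Omori-Yau uniformly in $t$ for the time-dependent family of complete $\Phi$-metrics $g(t)$: one must ensure that the underlying geometry remains controlled well enough that the principle of \S\ref{omori} applies with parameters independent of $t$, and that the differential inequalities for $\rho(t)$ and $R_{\min}(t)$ are valid in the appropriate Dini sense so that the integration argument goes through.
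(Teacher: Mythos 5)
Your proposal is correct and follows essentially the same strategy as the paper: the evolution equation $\partial_t R = (m-1)\Delta_{g(t)}R + R(R-\rho)$ is exactly \eqref{derivscalar}, your Omori--Yau monotonicity of $\rho(t)$ and $R_{\min}(t)$ reproduces Lemma \ref{evolutionscalar} and \eqref{minevolut}, and the integration of $\partial_t u = \eta(\rho - R)u$ against the decaying oscillation is the content of Proposition \ref{uniformboundpartialu} and the first step of Theorem \ref{convergencelong}. The two places where you diverge are minor. First, you only extract $\int_0^\infty \omega(s)\,ds < \infty$ from the differential inequality for $R_{\min}$, whereas the paper's Lemma \ref{exponentialapproxim} integrates the same inequality via Gronwall to get exponential decay $\omega(t)\le Ce^{\scal(g_\Phi)_{\sup}t}$; your weaker conclusion still suffices for the $C^0$ convergence of $u$. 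Second, for the final regularity upgrade the paper does not interpolate: it proves a compact embedding $C^{4,\alpha}_\Phi(M)\hookrightarrow x^{-\gamma}C^{4,\beta}_\Phi(M)$ into a \emph{weighted} H\"older space (Proposition \ref{compactembed}, a diagonal Arzel\`a--Ascoli argument over the exhaustion $M_{1/i}$), extracts a subsequence along which $\scal(g(t_n))$ converges pointwise to $\scal(g^*)$, and then sandwiches $\scal(g^*)$ between the common limit of $\sup_M\scal(g(t))$ and $\inf_M\scal(g(t))$. Your interpolation route would give full (not just subsequential) convergence in $C^{2,\alpha'}_\Phi$, but on a non-compact $\Phi$-manifold you would need to justify the interpolation inequality with constants uniform over the charts $\Psi_i$ of \S\ref{parabolic-estimates}; the paper's weighted compact embedding sidesteps this at the price of losing a power of $x$ near the boundary. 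Your closing worry about applying Omori--Yau to the time-dependent metrics is also resolved in the paper by always working with the fixed sequences \eqref{omori-sup}--\eqref{omori-inf} for $\Delta_\Phi$ and converting $\Delta_{g(t)}$ via Lemma \ref{conformlaplace} and the uniform two-sided bounds on $u$ from Theorem \ref{uniformboundu}.
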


One can view our 
contribution as an extension of Su\'{a}rez-Serrato and Tapie \cite{serratotapie} to a non-compact setting.  Let us conclude with a remark, that even though we only write out the proofs for $\mbox{CYF}^{+}$,
same statements hold for the decreasing curvature normalized Yamabe flow 
$\mbox{CYF}^{-}$ as well.

\subsection*{Appendix on $\Phi$-manifolds}

We dedicate a section on a special class of non-compact manifolds with infinite volume: the class of $\Phi$-manifolds.  These are manifolds with fibered boundary equipped with a particular Riemannian metric in its open interior and are, as explained in the appendix, manifolds with bounded geometry.  Thus, every result presented in this work holds, in particular, for $\Phi$-manifolds.  However, due to its special structure at the boundary, one can actually obtain more refined estimates of solutions, using arguments similar to the ones in \cite{bahuaud2014yamabe}. This allows us to prove short-time existence of the Yamabe flow in weighted H\"older spaces, which is not possible in the general case.

\subsubsection*{Acknowledgments} 
Bruno Caldeira thanks the Univers\"{a}t Oldenburg for the hospitality.
Boris Vertman thanks Tobias Marxen and also Gilles Carron (Universit\'e 
Nantes) for useful discussions and specifically for pointing out the paper 
by Su\'{a}rez-Serrato and Tapie \cite{serratotapie}.

\section{Manifolds of bounded geometry and H\"older spaces} \label{bdedgeo}

This section reviews the classical concept of manifolds of bounded geometry, as well as some of its basic consequences. 

\begin{defin}
A Riemannian manifold $(M,g_0)$ is said to have bounded geometry if it satisfies two conditions:
\begin{enumerate}
    \item its injectivity radius  is bounded uniformly from below away from zero, 
    i.e. there exists some uniform constant $\delta > 0$, such that for any open metric ball $B_{\delta}(p) \subset M$ 
    of radius $\delta$, centered at $p$, the exponential map
    \begin{equation*}
        \exp_{p}:B_{\delta}(0)\subset T_{p}M\rightarrow B_{\delta}(p)\subset M
    \end{equation*}
    is a diffeomorphism;
    \item its Ricci curvature is uniformly bounded, i.e. there exists an uniform constant $c'>0$ such that for any vector field $V$, one has
\begin{equation*}
    |\ric(V,V)| \leq c'g_{0}(V,V).
\end{equation*}
\end{enumerate} 
\end{defin}

By definition, a manifold $M$ of bounded geometry admits a cover by metric open balls $\{B_{\delta}(p)\}_{p \in M}$,  of uniform radii $\delta > 0$, that are quasi-isometric to the Euclidean open ball $B_{\delta}(0) \in \mathbb{R}^{m}$, where $m = \dim M$.  This means that for each point $p \in M$ the exponential map defines a diffeomorphism
\begin{equation}
    \Psi'_{p}: B_{\delta}(0) \rightarrow B_{\delta}(p)
\end{equation}
which re-scales the distances between points in each ball by a constant factor which is independent on the choice of the point $p$.  
We also consider the corresponding diffeomorphism between parabolic balls
\begin{equation}\label{Psi-pullback}
    \Psi_{p}: Q_{\delta}(0) := B_{\delta}(0)\times [0,T] \rightarrow B_{\delta}(p)\times [0,T].
\end{equation}
Given a manifold with bounded geometry $(M,g_0)$, we can define the corresponding H\"older spaces as follows.
Consider the classical H\"older space $C^{k,\alpha}(Q_\delta)$ with 
H\"older norm denoted by $\| \cdot \|_{k,\alpha, Q_\delta}$. Then the H\"older norm $\| \cdot \|_{k,\alpha}$
on $C^{k,\alpha}(M \times [0,T])$, is defined by
\begin{align}\label{local-global-norm}
\sup_{p \in M} \| \Psi_{p}^* u \|_{k,\alpha, Q_\delta}.
\end{align}

In fact, we will now observe that such local H\"older regularity in open metric balls $\{B_{\delta}(p)\}_{p \in M}$ is equivalent to a 
seemingly global H\"older regularity, i.e. we can alternatively define the H\"older spaces as follows. 

\begin{defin} \label{holder}
The H\"{o}lder space $C^{\alpha}(M\times [0,T])$, for $\alpha \in (0,1)$, is defined as the space of continuous functions 
$u \in C^{0}(M\times [0,T])$ which satisfy
\begin{equation}
    [u]_{\alpha} := \sup_{M^2_T} \left\{ \dfrac{|u(p,t)-u(p',t')|}{d(p,p')^{\alpha}+|t-t'|^{\alpha/2}}\right\} < \infty,
\end{equation}
where the supremum is taken over $M^2_T$ with $M_T:=M \times [0,T]$ and $d$ is the Riemannian distance function induced by $g_{0}$.  
The H\"{o}lder norm of any $u \in C^{\alpha}(M\times [0,T])$ is defined by
\begin{equation}
    \|u\|_{\alpha} := \|u\|_{\infty} + [u]_{\alpha}.
\end{equation}
\end{defin}

We also define the higher order H\"{o}lder spaces for any given $k \in \mathbb{N}$ 
as a subset of $k$ times continuously differentiable functions $C^k$
\begin{equation*}
  C^{k,\alpha}(M\times [0,T]) = 
    \biggsetdef{ u \in C^{k}(M\times [0,T])} 
        {\begin{array}{l}  (V \circ \partial^{l_{2}}_{t})u \in C^{\alpha}(M\times [0,T]),
           \\ \mbox{for} \; V \in {}^b\mbox{Diff}^{l_1}(M), \hspace{2mm} l_{1} + 2l_{2} \leq k 
        \end{array}}
\end{equation*}
where ${}^b \textup{Diff}^{l_1}(M)$ consist of those differential operators $D$ of $l_1$-th order, 
where the coefficients of $(\Psi'_p)^* \circ D \circ ((\Psi'_p)^{-1})^*$ are bounded
and $C^\alpha$ uniformly for all $p \in M$. This is a Banach space with the norm
\begin{equation}\label{global-norm}
\|u\|_{k,\alpha} := \|u\|_{\alpha} +\displaystyle\sum_{l_{1} + 2l_{2} \leq k} 
\sum_{V \in {}^b \textup{Diff}^{l_1}(M)} \|(V\circ \partial_{t}^{l_{2}})u\|_{\alpha}.
\end{equation}

\noindent The resulting normed vector spaces $C^{k,\alpha}(M\times [0,T])$ are Banach spaces.
Equivalence of \eqref{local-global-norm} and \eqref{global-norm} follows from a
simple observation.

\begin{lem}\label{local-norm}
The following defines an equivalent norm on $C^{\alpha}(M\times [0,T])$
\begin{equation}\label{eq.local-norm}
    \|u\|'_{\alpha} := \|u\|_{\infty} + [u]'_{\alpha}, \quad 
     [u]'_{\alpha} := \sup_{M^2_{T,\delta}} \left\{ \dfrac{|u(p,t)-u(p',t')|}{d(p,p')^{\alpha}+|t-t'|^{\alpha/2}}\right\},
\end{equation}
where the supremum is taken over 
$$M^2_{T,\delta} := \{(p,t), (p',t') \in M_T \mid d(p,p')^{\alpha}+|t-t'|^{\alpha/2} \leq \delta\}.$$
More precisely, we have the following relation between the two norms
$$\|u\|'_{\alpha} \leq  \|u\|_{\alpha} \leq (1+2\delta^{-1}) \|u\|'_\alpha.$$
\end{lem}

\begin{proof}
It is clear that $ \|u\|'_{\alpha} \leq  \|u\|_{\alpha}$. To prove the second
estimate, simply note for any $u \in C^{\alpha}(M\times [0,T])$
and any $(p,t), (p',t') \in M_T$ with $$d(p,p')^{\alpha}+|t-t'|^{\alpha/2}\geq \delta,$$
that we can estimate the H\"older differences as follows
\begin{align*}
   \dfrac{|u(p,t)-u(p',t')|}{d(p,p')^{\alpha}+|t-t'|^{\alpha/2}} \leq  \dfrac{|u(p,t)-u(p',t')|}{\delta}
   \leq 2 \delta^{-1} \| u \|_\infty.
\end{align*}
\end{proof}

As a consequence, we conclude that the various H\"older norms are equivalent,  
an observation that will be convenient in the appendix. 

\begin{cor}\label{local-global} The H\"older norms  \eqref{local-global-norm} and \eqref{global-norm}
are equivalent. \end{cor}

\begin{proof}
The statement follows from \eqref{Psi-pullback} and the fact that, for the uniform radii $\delta$, 
the distance function on $(M,g_0)$ and the Euclidean distance are uniformly equivalent.
\end{proof}

\begin{rmk}
Sometimes, we will also use H\"{o}lder spaces for functions depending either only on 
spacial variables or on time variables, denoted as $C^{k,\alpha}(M)$ and $C^{k,\alpha}([0,T])$, with H\"{o}lder brackets (for $k=0$)
\begin{align*}
   [u]_{\alpha} = \sup \dfrac{|u(p) - u(p')|}{d(p,p')^{\alpha}} \;\; \mbox{and} \;\; [u]_{\alpha} = \sup \dfrac{|u(t)-u(t')|}{|t-t'|^{\alpha/2}},
\end{align*}
respectively.
\end{rmk}

\section{Omori-Yau maximum principle and uniqueness of solutions} \label{omori}

In this section, we will prove some consequences of the Omori-Yau maximum principle on manifolds with bounded geometry.
Given a manifold $(M,g_{0})$ of bounded geometry, and the (negative) Laplace Beltrami operator $\Delta_{g_{0}}$, associated to $g_{0}$,
the Omori-Yau maximum principle (for the Laplacian) asserts that for any function $u \in C^{2}(M)$ there is a sequence $\{p_{k}\}_{k} \subset M$ 
satisfying the following estimates:
\begin{equation}\label{omori-sup}
    u(p_{k}) > \displaystyle\sup_{M}u - \dfrac{1}{k} \;\; \mbox{and} \;\; \Delta_{g_{0}}u(p_{k}) < \dfrac{1}{k}.
\end{equation}
A similar statement holds for functions $u \in C^{2}(M)$ near its infimum values, which means that there 
exists a sequence $\{p'_{k}\}_{k} \subset M$ such that
\begin{equation} \label{omori-inf}
    u(p'_{k}) < \inf_{M} u + \dfrac{1}{k} \;\; \mbox{and} \;\; \Delta_{g_{0}}(p'_{k}) > \dfrac{1}{k}.
\end{equation}
By \cite[Theorem 2.3]{aliasrigoli}, the Omori-Yau principle for the Laplacian holds on any manifold
with Ricci curvature bounded from below, and in particular on manifolds with bounded geometry. 

\subsection{Some enveloping theorem} 

\begin{prop}\label{envelope}
Consider any $u \in C^{2,\alpha}(M\times [0,T])$. Then the functions
$$u_{\sup} (t) := \sup_{M}u(\cdot, t), \quad u_{\inf}(t):= \inf_{M}u(\cdot, t)$$ are differentiable
almost everywhere in $(0,T)$ and at those $t \in (0,T)$ we find, in the notation of 
\eqref{omori-sup} and \eqref{omori-inf},
\begin{equation}\label{dini}
\begin{split}
&\frac{\partial}{\partial t}u_{\sup} (t) \leq \lim_{\epsilon \to 0} 
\left( \limsup_{k\to \infty} \frac{\partial u}{\partial t} \left(p_k(t+\epsilon), t + \epsilon\right)\right), \\
&\frac{\partial}{\partial t} u_{\inf} (t) \geq \lim_{\epsilon \to 0} 
\left( \liminf_{k\to \infty} \frac{\partial u}{\partial t} \left(p'_k(t+\epsilon), t + \epsilon\right)\right).  
\end{split}
\end{equation}
\end{prop}

\begin{proof}
Apply \eqref{omori-sup} to $u(t+\epsilon)$ and find by the mean value theorem 
\begin{align*}
u_{\sup} (t+\epsilon) &\leq u(p_k(t+\epsilon), t + \epsilon) + \frac{1}{k}
\\ &= u(p_k(t+\epsilon), t) + \epsilon \cdot \frac{\partial u}{\partial t}(p_k(t+\epsilon), \xi) + \frac{1}{k},
\end{align*}
for some $\xi \in (t,t+ \epsilon)$. On the other hand, we can write
\begin{align*}
u_{\sup} (t+\epsilon) &= u_{\sup} (t) + \epsilon \cdot 
\frac{u_{\sup} (t+\epsilon) - u_{\sup} (t)}{\epsilon} \\
&\geq u(p_k(t+\epsilon),t) + \epsilon \cdot 
\frac{u_{\sup} (t+\epsilon) - u_{\sup} (t)}{\epsilon}. 
\end{align*}
Combining these two estimates leads, after cancelling $u(p_k(t+\epsilon),t)$, to 
\begin{align*}
 \epsilon \cdot \frac{u_{\sup} (t+\epsilon) - u_{\sup} (t)}{\epsilon}
\leq \epsilon \cdot \frac{\partial u}{\partial t}(p_k(t+\epsilon), \xi) + \frac{1}{k}.
\end{align*}
Taking the limit superior as $k \to \infty$ on the right hand side, we obtain
\begin{align*}
 \epsilon \cdot \frac{u_{\sup} (t+\epsilon) - u_{\sup} (t)}{\epsilon}
\leq \epsilon \cdot \limsup_{k\to \infty}\frac{\partial u}{\partial t}(p_k(t+\epsilon), \xi).
\end{align*}
Canceling $\epsilon$ on both sides, we find
\begin{equation}\label{est1}
\begin{split}
&\frac{u_{\sup} (t+\epsilon) - u_{\sup} (t)}{\epsilon}
\leq \limsup_{k\to \infty}\frac{\partial u}{\partial t}(p_k(t+\epsilon), \xi)
\\ &= \limsup_{k\to \infty} \left( \frac{\partial u}{\partial t}(p_k(t+\epsilon), \xi)
- \frac{\partial u}{\partial t}(p_k(t+\epsilon), t+\epsilon) \right)
\\ &+ \limsup_{k\to \infty}\frac{\partial u}{\partial t}(p_k(t+\epsilon), t+\epsilon).
\end{split}
\end{equation}
For any $u \in C^{2,\alpha}(M\times [0,T])$ we can estimate
\begin{equation}\label{est2}
\begin{split}
&\bullet \quad  \limsup_{k\to \infty} \left| \frac{\partial u}{\partial t}(p_k(t+\epsilon), \xi)
- \frac{\partial u}{\partial t}(p_k(t+\epsilon), t+\epsilon) \right| \leq \|u\|_{2,\alpha} \epsilon^{\alpha/2}, \\
&\bullet \quad \limsup_{k\to \infty}\left| \frac{\partial u}{\partial t}(p_k(t+\epsilon), t+\epsilon) \right| \leq \|u\|_{2,\alpha}.
\end{split}
\end{equation}

Thus the last two summands in 
\eqref{est1} are bounded uniformly in $\epsilon$. Repeating the same arguments with 
the roles of $u(t)$ replaced by $u(t+\epsilon)$ interchanged, we conclude that
$u_{\sup}$ is locally Lipschitz and thus by the theorem of Rademacher, differentiable
almost everywhere. This proves the first statement. 
\medskip

At those $t\in (0,T)$, where $u_{\sup}$ is differentiable, we conclude
from \eqref{est1} and the first line in \eqref{est2}, taking $\epsilon \to 0$
\begin{equation}
\begin{split}
\frac{\partial}{\partial t} u_{\sup}(t) \leq \lim_{\epsilon \to 0} 
\left( \limsup_{k\to \infty} \frac{\partial u}{\partial t} \left(p_k(t+\epsilon), t + \epsilon\right)\right).  
\end{split}
\end{equation}
This proves the first inequality in \eqref{dini}. The second inequality
follows from the first, using \eqref{omori-inf}, with $u$ replaced by $(-u)$.
\end{proof}

\subsection{Uniqueness of solutions} 

We can now turn to uniqueness of solutions to the Yamabe flow. 
In view of Proposition \ref{envelope}, we can now prove the following result. 

\begin{prop}\label{maxprinccor}
Let $a$ be a bounded positive function and $b$ be a bounded non-negative function on $M \times [0,T]$.
Let $u \in C^{2,\alpha}(M\times [0,T])$ be a solution to 
$$
\partial_t  u = a \Delta_{g_{0}} u -b u, 
$$
with initial value $0$ at $t=0$. Then $u\equiv 0$.
\end{prop}

\begin{proof}
Consider first the case where $b=0$. Note first by \eqref{omori-sup} and \eqref{omori-inf}
\begin{align*}
\frac{\partial}{\partial t} u\bigl(p_k(t),t\bigr) \leq \frac{a\bigl(p_k(t),t\bigr)}{k}, \quad
\frac{\partial}{\partial t} u\bigl(p'_k(t),t\bigr) \geq -\frac{a\bigl(p'_k(t),t\bigr)}{k}.
\end{align*}
Then in view of Proposition \ref{envelope} we find almost everywhere
$$
\frac{\partial}{\partial t} u_{\sup}(t) \leq 0, \quad \frac{\partial}{\partial t} u_{\inf}(t) \geq 0.
$$
Then in view of $u(t=0)=0$, we conclude $u\equiv 0$. Now the general statement follows
as in \cite[Corollary 9.2]{giuseppebruno}. 
\end{proof}

\begin{cor} \label{uniqueness}
	Consider the Yamabe flow equation as in \Eqref{yf-u}
	\begin{equation}\label{yf-u2}
	\partial_{t}u = (m-1)u^{-1/\eta}\Delta_{g_{0}}u - \eta \scal(g_{0})u^{1-1/\eta}, \;\; u|_{t=0} = u_{0},
	\end{equation}
	for some positive initial data $u_{0}\in C^{2,\alpha}(M)$.  
	For such a Cauchy problem, a positive solution in $C^{2,\alpha}(M\times [0,T])$ 
	is unique for any given $0<T<\infty$.
\end{cor}

\begin{proof}
	Suppose $u$ and $v$ are two positive solutions in $C^{2,\alpha}(M)(M\times [0,T])$ for 
	\eqref{yf-u2}. Consider $\omega = u - v \in C^{2,\alpha}(M)(M\times [0,T])$. Since 
	$u(t=0)=v(t=0) =u_0$, we find $\omega(t=0) = 0$. Moreover, we infer from \eqref{yf-u2}
	\begin{equation*}
	u^{1/\eta}\partial_{t}u - v^{1/\eta}\partial_{t}v = (m-1)\Delta_{g_{0}}\omega - \eta\scal(g_{0})\omega.
	\end{equation*}
	From the definition of $\omega$, we have
	\begin{align*}
	\partial_{t}\omega = &u^{-1/\eta}\left(u^{1/\eta}\partial_{t}u - v^{1/\eta}\partial_{t}v + (v^{1/\eta} - u^{1/\eta})\partial_{t}v\right) \\
	= &u^{-1/\eta}\left((m-1)\Delta_{g_{0}}\omega - \eta\scal(g_{0})\omega + (v^{1/\eta} - u^{1/\eta})\partial_{t}v\right) \\
	= &- \left(\eta\scal(g_{0})u^{-1/\eta} + \dfrac{\partial_{t}v}{\eta}\int_{0}^{1}(sv + (1-s)u)^{1/\eta - 1}\di s\right)\omega
	\\ &\quad \qquad \qquad \qquad \qquad \qquad \qquad \qquad +(m-1)u^{-1/\eta}\Delta_{g_{0}}\omega,
	\end{align*}
	where the last equality follows from the Taylor's theorem applied for the function 
	$f(s):= (sv + (1-s)u)^{1/\eta}$.  This means that $\omega$ is a solution of the equation 
	\begin{equation*}
	\partial_{t}\omega = a\Delta_{g_{0}}\omega + b\omega,
	\end{equation*}
	with $a \in C^{2,\alpha}(M\times [0,T])$ positive and $b \in C^{\alpha}(M\times [0,T])$.  Since nothing can be said about the sign of the $b$-term above, we consider any negative constant 
$c<-\|b\|_{\infty}$ and apply an integration factor trick by writing $\omega' = e^{ct}\omega$.  
We obtain an equation for $\omega$
	\begin{equation*}
	\partial_{t}\omega' = a\Delta_{g_{0}}\omega' + (b+c)\omega',
	\end{equation*}
	with $\omega'|_{t=0} = \omega|_{t=0} = 0$.  Now, since $c<-\|b\|_{\infty}$, we have $(b+c) < 0$.  
	From Proposition \ref{maxprinccor}, it follows that $\omega' \equiv 0$ and, consequently, $\omega \equiv 0$.
\end{proof}

\subsection{Some differential inequalities for solutions to $\mbox{CYF}^{+}$} 

As a direct consequence of Proposition \ref{envelope} we also obtain differential inequalities for 
solutions to the increasing curvature normalized Yamabe flow $\mbox{CYF}^{+}$. These 
will be central later in the derivation of a priori estimates. 

\begin{cor}\label{diff-ineq}
Let $u \in C^{2,\alpha}(M\times [0,T])$ be a positive (uniformly bounded away from zero) solution to 
the increasing curvature normalized Yamabe flow $\mbox{CYF}^{+}$ in \eqref{CYF-u}.
Then almost everywhere in $(0,T)$
  \begin{equation}
  \begin{split}
       &\frac{\partial}{\partial t} u_{\sup}
       \leq \eta \sup_M \scal(g(t)) \cdot u_{\sup} + \eta \sup_M |\scal(g_{0})| \cdot u_{\sup}^{1-1/\eta}, \\
        &\frac{\partial}{\partial t} u_{\inf}
       \geq \eta \sup_M \scal(g(t)) \cdot u_{\inf} + \eta \inf_M |\scal(g_{0})| \cdot u_{\inf}^{1-1/\eta}.
            \end{split}
   \end{equation}
\end{cor}

\begin{proof}
Note first by \eqref{CYF-u} and \eqref{omori-sup}
  \begin{equation}
  \begin{split}
        \frac{\partial}{\partial t} u\bigl(p_k(t),t\bigr) &\leq \frac{(m-1)}{k} \cdot u^{-1/\eta}\bigl(p_k(t),t\bigr) 
        + \eta \sup_M \scal(g(t)) \cdot u\bigl(p_k(t),t\bigr) \\ & - \, \eta \scal(g_{0})\bigl(p_k(t),t\bigr) \cdot u\bigl(p_k(t),t\bigr)^{1-1/\eta}.
   \end{split}
   \end{equation}
Since $u$ is positive and uniformly bounded away from zero, we conclude
  \begin{equation}
  \begin{split}
        \limsup_{k\to \infty} \frac{\partial u}{\partial t} \bigl(p_k(t),t\bigr)
        &\leq \eta \sup_M \scal(g(t)) \cdot u_{\sup}(t) \\ &+ \eta \sup_M |\scal(g_{0})| \cdot u_{\sup}(t)^{1-1/\eta}.
   \end{split}
   \end{equation}
Now the first statement follows from Proposition \ref{envelope}.
The second statement follows by \eqref{omori-inf} along the same lines.
\end{proof}

\section{Parabolic Schauder estimates in bounded geometry} \label{parabolic-estimates}

In order to prove short-time existence of the Yamabe flow on manifolds with bounded geometry, 
one must first obtain mapping properties for a parametrix for appropriate version of the heat equation.
In a previous work \cite{bahuaud2019long}, a priori estimates like in Theorem \ref{uniformboundu} and 
Proposition \ref{uniformboundpartialu} were converted into H\"older regularity by mapping properties of some
heat parametrix for $(\partial_t + u^{-1/\eta} \Delta_{g_{0}})$. This approach fails here since the parametrix
construction in \cite[\S 4]{bahuaud2019long} does not work for $u \in C^{k,\alpha}(M\times [0,T])$ in a 
general setting of bounded geometry. Therefore, we argue here by reducing to classical parabolic Schauder 
estimates.  To achieve this, we use the classical Krylov-Safonov estimate, see \cite{KS} and the nice exposition in 
\cite[Theorem 12]{picard}.  For convenience of the reader, they are stated below:
\begin{thm}\cite[Theorem 12]{picard} \label{krylov}
Let $u \in C^{k+2}(Q_{2\delta})$, $k\geq 2$, such that $u$ satisfies
\begin{equation*}
    \left(\partial_{t} - \displaystyle\sum_{i,j}a^{ij}\partial_{x_{i}}\partial_{x_{j}}\right)u = f,
\end{equation*}
with $a^{ij} = a^{ji}$ and $\Lambda^{-1}\delta^{ij}\leq a^{ij} \leq \Lambda\delta^{ij}$ for some $\Lambda>0$.  Then $f \in L^{\infty}(Q_{2\delta})$ implies  \begin{equation*}
    \|u\|_{\alpha;Q_{\delta}} \leq C\left(\|u\|_{\infty;Q_{2\delta}} + \|f\|_{\infty;Q_{2\delta}}\right).
\end{equation*}
\end{thm}

\begin{prop}\label{Q-mapping-properties}
Consider $a \in C^{k,\alpha}(M)$ positive, uniformly bounded away from zero. 
Then the in-homogeneous heat equation 
\begin{equation} \label{cp1}
    (\partial_t - a \cdot \Delta_{g_{0}}) u = f, \;\; u(t=0)=0,
\end{equation} 
with $f \in C^{k,\alpha}(M\times [0,T])$.  Then
\begin{enumerate}
    \item There exists a constant $C>0$ independent of $T$ such that
    \begin{equation}
        \|u\|_{k+2,\alpha} \leq C\ \Big(\|u\|_{\infty} + \|f\|_{k,\alpha}\Bigr);
    \end{equation}
    \item \Eqref{cp1} has a parametrix $Q$ acting as a bounded linear map
    \begin{equation}\label{Q-parametrix-mapping}\begin{split}
&Q:C^{k,\alpha}(M\times [0,T]) \rightarrow C^{k+2,\alpha}(M\times [0,T]), \\
&Q:C^{k+2,\alpha}(M\times [0,T]) \rightarrow t \ C^{k+2,\alpha}(M\times [0,T]).
\end{split}
\end{equation}
\end{enumerate}
\end{prop}

\begin{proof}
We will proceed by reducing the argument to local $\delta$-balls.  To do so, let us consider the quasi-isometries as in \eqref{Psi-pullback}.  In the proof that follows, we omit the subscript in $\Psi$ for simplicity.  
First, assume the function $u$ to satisfy \eqref{cp1}.  Thus, the following equation holds:
\begin{equation*}
    \Bigl(\partial_{t} - \Psi^{*}a\cdot \widetilde{\Delta_{g_{0}}}\Bigr)\Psi^{*}u = \Psi^{*}f.
\end{equation*}
where $\widetilde{\Delta_{g_{0}}}$ is the pullback of $\Delta_{g_{0}}$ via $\Psi$.  Set $Q_{\delta} := B_{\delta}(0)\times [0,\delta^{2}]$.  By the Krylov-Safonov estimate, 
see \cite{KS} and cf. \cite[Theorem 12]{picard}, we find for some uniform constant $C>0$, 
depending only on $\delta, \|u\|_\infty$ and $\|a\|_\infty$
\begin{align*}
\| \Psi^* u \|_{\alpha, Q_{\delta/2}} &\leq C \Bigl(\| \Psi^* u \|_{\infty,Q_\delta}  + \| \Psi^* f \|_{\infty,Q_\delta} \Bigr) \\
&\leq C \Bigl(\| u \|_{\infty}  + \| f \|_{\infty} \Bigr).
\end{align*}
Thus $\Psi^* u \in C^{\alpha}(Q_{\delta/2})$. By Lemma \ref{local-global} we conclude $u \in C^{\alpha}(M\times [0,\delta^{2}/4])$.
We extend the regularity statement to the whole time interval $[0,T]$ (with constants independent of $T$) 
iteratively, by setting $t= \delta^2 + t'$ and obtaining by the argument above $u \in C^{\alpha}(M\times [\delta^{2}/4, \delta^{2}/2])$,
and repeating the iteration, until we reach $T$.  Now, by proceeding similarly as above by using the classical parabolic Schauder estimates, see \cite{krylov} and cf. 
\cite[Theorem 6]{picard}, we get
\begin{equation}\label{parabolic-Schauder}
\begin{split}
\| \Psi^* u \|_{k+2, \alpha, Q_\delta/2} &\leq C \Bigl(\| \Psi^* u \|_{\infty,Q_\delta}  + \| \Psi^* f \|_{k, \alpha, Q_\delta} \Bigr)
\\ &\leq C \Bigl(\| u \|_{\infty}  + \| f \|_{k,\alpha} \Bigr).
\end{split}\end{equation}
Repeat the argument as presented above to extend the estimate to the entire interval $[0,T]$.  Thus, $u \in C^{k+2,\alpha}(M\times [0,T])$ and the first property is verified.

Now, for the second item, consider the in-homogeneous heat equation with $f \in C^{k,\alpha}(M\times [0,T])$ and 
initial value $u_0 \in C^{k+2,\alpha}(M)$
$$
(\partial_t - a \cdot \Delta_{g_{0}}) u = f, \;\;  u(t=0)=u_0.
$$

  Then, by reducing the argument to local $\delta$-balls as in \S \ref{parabolic-estimates}, 
we can follow the proof of \cite[Theorem 5.1 on p.320]{Lad}, and conclude
for some uniform constant $C>0$ existence of a unique solution $u \in C^{k+2,\alpha}(M\times [0,T])$ with
$$
\| u \|_{k+2,\alpha} \leq C \Bigl( \|f\|_{k,\alpha} + \|u_0\|_{k+2, \alpha}\Bigr). 
$$
This proves the first mapping property in \eqref{Q-parametrix-mapping} by setting $u_0=0$.
For the second mapping property in \eqref{Q-parametrix-mapping}, set $f=0$ and obtain 
a solution $u= Ru_0$ with the solution operator $R$ acting as a bounded
linear map 
$$R: C^{k+2,\alpha}(M) \to C^{k+2,\alpha}(M \times [0,T]).$$
The solution operator $Q$ of the in-homogeneous problem is then given by
\begin{equation}
Qf(p,t) = \int_0^t \Bigl( Rf(\tilde{t}) \Bigr) (p,t-\tilde{t}) d\tilde{t}.
\end{equation}
Indeed, a direct computation shows 
\begin{align*}
\Bigl(\partial_t - &a \cdot \Delta_\Phi\Bigr) Qf(p,t) \\
= \, &f(p,t) + \int_0^t \Bigl( \partial_t - a \cdot \Delta_\Phi \Bigr) \Bigl( Rf(\tilde{t}) \Bigr) (p,t-\tilde{t}) d\tilde{t} \\
= \, &f(p,t).
\end{align*}
This implies directly the second mapping property in \eqref{Q-parametrix-mapping} and 
completes the proof.
\end{proof}

\begin{rmk}
The mapping properties obtained in the previous result hold also 
for the parametrix $H$ of the heat equation $(\partial - \Delta_{g_{0}})$, since the constant function equal to $1$ satisfies the hypothesis in Proposition \ref{Q-parametrix-mapping}.
\end{rmk}

\section{Short-time existence of the Yamabe flow} \label{shorttime}

Consider a manifold with bounded geometry $(M,g_{0})$ of dimension $m\geq 3$ and set $\eta := (m-2)/4$.
We write $\Delta_{g_{0}}$ for the negative Laplace Beltrami operator of $(M,g_{0})$.
In this section we construct a short-time solution to the Yamabe flow equation \eqref{yf-u} of the conformal factor
\begin{equation}\label{transformed2}
    \partial_{t}u = (m-1)u^{-1/\eta}\Delta_{g_{0}} u - \eta\scal(g_{0})u^{1-1/\eta},\;\; u|_{t=0} = 1.
\end{equation}
We plan to construct a solution as a fixed point of a contraction in $C^{k+2,\alpha}(M\times [0,T])$
and some short time $T>0$. We assume below that $k=0$, since the general case 
follows the $k=0$ case verbatim. We write $u=1+v$ and obtain from \eqref{transformed2} an equation for $v$:
\begin{equation}\label{linearizedflow}
    \partial_{t}v = (m-1)\Delta_{g_{0}} v (1 + v)^{-1/\eta} - \eta\scal(g_{0})(1+v)^{1-1/\eta}; \;\; v|_{t=0} = 0.
\end{equation}
Assume $v \in C^{2,\alpha}(M\times [0,T])$ with $\|v\|_{2,\alpha} \leq \mu$
for some $\mu <1$. Then the following series converges in the Banach space $C^{2,\alpha}(M\times [0,T])$:
\begin{align*}
        &(1 + v)^{-1/\eta}  =  \displaystyle\sum_{j=0}^{\infty} a_{j} v^{j} =  1 - \dfrac{v}{\eta} + \displaystyle\sum_{j=2}^{\infty} a_{j} v^{j} 
        =: 1 - \dfrac{v}{\eta} + v^{2}s(v) \\ &\textup{with} \quad \|(1+v)^{-1/\eta}\|_{2,\alpha} \leq C_{\mu} \quad
        \textup{and} \quad  \|s(v)\|_{2,\alpha} \leq C_{\mu},  
\end{align*}
for some $C_{\mu} > 0$, depending only on $\mu$.  
Plugging the identity $(1+v)^{-1/\eta} = 1 - v/\eta + v^{2}s(v)$ 
into (\ref{linearizedflow}) yields, after rescaling the time variable by $(m-1)$, 
the following flow equation:
\begin{align}\label{shorttimeflow}
\begin{split}
    (\partial_{t} - \Delta_{g_{0}}) v = &-\dfrac{1}{\eta}v\Delta_{g_{0}} v + v^{2}s(v)\Delta_{g_{0}} v 
       - \dfrac{\eta}{m-1}\scal(g_{0}) + \dfrac{1}{m-1}\scal(g_{0})v \\
       & + \dfrac{1}{m-1}\scal(g_{0})v^{2}(1 - \eta s(v)  -\eta vs(v)).
       \end{split}
\end{align}
We will simplify the right hand side by introducing two non-linear operators, 
the first one containing no derivatives of $v$:
\begin{equation*}\begin{split}
    F_{1}(v) := & - \dfrac{\eta}{m-1}\scal(g_{0})+ \dfrac{1}{m-1}\scal(g_{0})v
     \\ & + \dfrac{1}{m-1}\scal(g_{0})v^{2}(1 - \eta s(v)  -\eta vs(v)).
        \end{split} \end{equation*}
The second one is in a certain sense quadratic in $v$ and defined by
\begin{equation*}\begin{split}
F_{2}(v) := &-\dfrac{1}{\eta}v\Delta_{g_{0}} v + v^{2}s(v)\Delta_{g_{0}} v.
\end{split} \end{equation*}
In this notation, $(\ref{shorttimeflow})$ can be written as 
\begin{equation} \label{flowcontraction}
    (\partial_{t} - \Delta_{g_{0}})v = (F_{1}+F_{2})v; \;\; v|_{t=0} = 0.
\end{equation}
Our intention is to prove short-time existence of solution of (\ref{flowcontraction}) by using 
exactly the same argument as in \cite[Corollary 1.2]{giuseppebruno}, 
albeit with slightly different H\"older spaces, which gives some conditions for the contraction 
argument to work (the argument in \cite[Corollary 1.2]{giuseppebruno} does not 
depend on a specific choice of H\"older spaces). To this end, we need to prove some properties of $F_1$ and $F_2$.

\begin{lem} \label{lemmaf2}
Denote by $\mathscr{B}$ the open ball of radius $1$ in $C^{2,\alpha}(M\times [0,T])$.
Then the map $F_{2}:\mathscr{B} \rightarrow C^{\alpha}(M\times [0,T])$ is bounded.  Moreover, for any two functions $v$,$v' \in \mathscr{B} \subset C^{2,\alpha}(M\times [0,T])$ satisfying $$\|v\|_{2,\alpha},\|v'\|_{2,\alpha} \leq\mu < 1,$$ 
	there exists a constant $C_{\mu} > 0$ such that
	\begin{enumerate}
		\item $\|F_{2}(v) - F_{2}(v')\|_{\alpha} \leq C_{\mu}\max\{\|v\|_{2,\alpha},\|v'\|_{2,\alpha}\}\|v-v'\|_{2,\alpha}$,  
		\item $\|F_{2}(v)\|_{k,\alpha} \leq C_{\mu}\|v\|^{2}_{k+2,\alpha}.$
	\end{enumerate}
\end{lem}

\begin{proof}
We shall write $\Delta= \Delta_{g_{0}}$ for simplicity of notation.
 First, let $v \in \mathscr{B}$ with $\|v\|_{2,\alpha} \leq \mu < 1$.  
 Then, by the definition of $C^{2,\alpha}(M\times [0,T])$ and the fact that 
 $\Delta \in \mbox{Diff}^{2}_{\Phi}$(M), it follows that $\Delta v \in C^{\alpha}(M\times [0,T])$. We can thus estimate
$$\begin{array}{rcl}
\|F_{2}(v)\|_{\alpha} & \leq & C_\mu \left( \|v\Delta v\|_{\alpha} + \|v^{2}s(v)\Delta v\|_{\alpha}\right) \\
& \leq &C_\mu \left( \|v\|_{\alpha}\|\Delta v\|_{\alpha} + \|v^{2}s(v)\|_{\alpha}\|\Delta v\|_{\alpha}\right)\\
& \leq & C_\mu \|v\|^{2}_{2,\alpha},
\end{array}$$
for some $C_\mu>0$ depending only on $\mu$ and possibly changing in each estimation step. 
This proves the second item and in particular boundedness of $F_{2}:\mathscr{B} \rightarrow 
C^{\alpha}(M\times [0,T])$. For the first item we write for any 
$v,v' \in \mathscr{B}$
\begin{equation} \label{dif1}
    v^{2}s(v) - (v')^{2}s(v') =: (v-v')O_{1}(v,v'), 
\end{equation}
where $O_{1}(v,v')$ is a polynomial combination in $v$ and $v'$. \Eqref{dif1} implies
\begin{align*}
    F_{2}(v) - F_{2}(v')
= & -\frac{1}{\eta}\left(\Delta v (v-v') + v'\Delta(v-v')\right) \\
& + O_{1}(v,v')\left((v - v')\Delta v + v'\Delta (v-v')\right),
\end{align*}
which then implies
\begin{align*}
\|F_{2}(v) - F_{2}(v')\|_{\alpha} & \leq C_\mu \left( 
\|\Delta v\|_{\alpha}\|v-v'\|_{\alpha} + \|v'\|_{\alpha}\|\Delta(v-v')\|_{\alpha} \right.\\
& + \left.\|\Delta v\|_{\alpha}\|v-v'\|_{\alpha} + \|v'\|_{\alpha}\|v-v'\|_{2,\alpha} \right) \\
&\leq C_{\mu}\max\{\|v\|_{2,\alpha},\|v'\|_{2,\alpha}\}\|v-v'\|_{2,\alpha}.
\end{align*}\end{proof} 

\begin{lem} \label{lemmaf1}
Assume that $\scal(g_{0}) \in C^{1,\alpha}(M)$.  
Denote by $\mathscr{B}$ the open ball of radius $1$ in $C^{2,\alpha}(M\times [0,T])$.
Then $F_{1}$ maps $\mathscr{B}$ into $C^{1,\alpha}(M\times [0,T])$.  Furthermore, if 
$v$,$v' \in \mathscr{B}$ with $\|v\|_{2,\alpha}$,
$\|v'\|_{2,\alpha} \leq \mu < 1$, there exists a constant $C_{\mu}$ such that 
\begin{enumerate}
\item $\|F_{1}(v) - F_{1}(v')\|_{1,\alpha} \leq C_{\mu}\|v-v'\|_{2,\alpha}$, 
\item $\|F_{1}(v)\|_{1,\alpha} \leq C_{\mu}.$
\end{enumerate}
\end{lem}

\begin{proof}
 First, consider $v \in \mathscr{B} \subset C^{2,\alpha}(M\times [0,T])$.  
 Since by assumption $\scal(g_{\Phi}) \in C^{1,\alpha}(M)$, we find 
$$
\scal(g_{0})v^{2}(1 - \eta s(v)  -\eta vs(v)) \in C^{1,\alpha}(M\times [0,T]).
$$  Now, assume $\|v\|_{2,\alpha} \leq \mu < 1$.  We can now estimate
\begin{align*}
\|F_{1}(v)\|_{1,\alpha} &\leq C_{\mu} \|\scal(g_{0})\|_{1,\alpha}
\left( 1 + \|v\|_{2,\alpha} + \|v^2\|_{2,\alpha}\right) \leq C_{\mu},
\end{align*}
for some $C_\mu>0$ depending only on $\mu$ and possibly changing in each estimation step. 
This completes the proof for the second item. In particular, 
$F_{1}$ indeed maps $\mathscr{B}$ into $C^{1,\alpha}(M\times [0,T])$.
For the first item we have for any $v,v' \in \mathscr{B}$ with
$\|v\|_{2,\alpha},\|v'\|_{2,\alpha} \leq \mu < 1$
\begin{align*}
    \|F_{1}(v) - F_{1}(v')\|_{1,\alpha} &\leq C_\mu \|\scal(g_{0})\|_{1,\alpha}\|v-v'\|_{2,\alpha} \\
    &+C_\mu \|\scal(g_{0})\|_{1,\alpha}\|v^{2} - (v')^{2}\|_{2,\alpha}\\
&+ C_\mu \|\scal(g_{0})\|_{1,\alpha}\|v^{2}s(v) - (v')^{2}s(v')\|_{2,\alpha} \\
&+C_\mu \|\scal(g_{0})\|_{1,\alpha}\|v^{3}s(v)-(v')^{3}s(v')\|_{2,\alpha} \\
&\leq C_{\mu}\|v-v'\|_{2,\alpha},
\end{align*}
where in the final estimate we use \eqref{dif1} and its analogue for $v^{3}s(v)$.
This concludes the first item and, naturally, finishes the proof.
\end{proof} 

Now, exactly the same argument as in \cite[{Corollary 1.2}]{giuseppebruno} (with $a=1$) implies directly that, 
for $\scal(g_{0}) \in C^{1,\alpha}(M)$,
the map $H \circ (F_1 + F_2)$ is a contraction on a closed 
ball $\overline{\mathscr{B}}_\mu \subset C^{2,\alpha}(M\times [0,T])$
of radius $\mu>0$, provided $\mu, T > 0$ are sufficiently small. Thus the flow
\eqref{flowcontraction} admits a solution $v \in \overline{\mathscr{B}}_\mu$
as a fixed point of that contraction. Setting $u=1+v$, we obtain a short-time
solution for the Yamabe flow \eqref{transformed2} and thus to \eqref{yamabeflow}.
The same argument yields a solution in $C^{k+2,\alpha}(M\times [0,T])$
for a general $k \in \N_0$, provided $\scal(g_{0}) \in C^{k+1,\alpha}(M)$.

\begin{thm} \label{theoremshorttime}
Consider a manifold with bounded geometry $(M,g_{0})$ of dimension $m \geq 3$.
Assume $\scal(g_{0}) \in C^{k+1,\alpha}(M)$ for some $\alpha \in (0,1)$ and any $k \in \N_0$.  
Then the Yamabe flow \eqref{yamabeflow} admits a unique solution $g = u^{4/(m-2)}g_{0}$, where 
$u \in C^{k+2,\alpha}(M\times [0,T])$, for some $T>0$
sufficiently small.
\end{thm}

\section{Curvature-normalized Yamabe flow ($\mbox{CYF}^{+}$)} \label{renormalization}

Consider the increasing curvature normalized Yamabe flow $\mbox{CYF}^{+}$
\begin{equation*}
    \partial_{t}g = (\scal(g)_{\sup} - \scal(g))g, \;\; \mbox{where} \;\; \scal(g(t))_{\sup} := \displaystyle\sup_{M} \scal(g(t)).
\end{equation*}
see \eqref{CYF}, introduced by Su\'{a}rez-Serrato and Tapie \cite{serratotapie} to study entropy rigidity on the Yamabe flow
in the compact setting. We are interested in the non-compact setting of a manifold with bounded geometry $(M,g_{0})$, which is why the usual normalization by \eqref{rhocompact} does not work
and we resort to the $\mbox{CYF}^{+}$ normalization.
We can study the decreasing curvature normalized Yamabe flow $\mbox{CYF}^{-}$
with $\scal(g)_{\sup}$ replaced by $\scal(g)_{\inf}$
along the same lines.\medskip

Short time existence of $\mbox{CYF}^{+}$ (as well as $\mbox{CYF}^{-}$) follows
by a simple time rescaling. Indeed, let $g(t) = u(t)^{1/\eta}g_{0}$ be family of Riemannian metrics satisfying the 
(unnormalized) Yamabe flow \eqref{yamabeflow} with $u \in C^{2,\alpha}(M\times [0,T])$. Consider the functions
\begin{equation}
\begin{split}
    &f(t) = \exp \left( \int_{0}^{t}\eta\scal(g(\theta))_{\sup}\di \theta\right), \\
    &F(t) = \int_{0}^{t}f(\theta)^{1/\eta}\di \theta - f(0)^{1/\eta}.
\end{split}
\end{equation}
Note that $f$ is positive and $F$ is a primitive for $f$ satisfying $F(0)=0$.  Moreover, since $dF/dt > 0$, it follows that $F^{-1}$ is well-defined.  Thus, we can define a 1-parameter family of Riemannian metrics by
\begin{equation}
    \tilde{g}(\tau) := \tilde{u}(\tau)^{1/\eta} g_{0},  \quad \textup{where} \quad \tilde{u}(\tau):= (fu)(F^{-1}(\tau)).
\end{equation}
One can easily check from $u \in C^{2,\alpha}(M\times [0,T])$ that $\tilde{u} \in C^{2,\alpha}(M\times [0,\widetilde{T}])$
with $\widetilde{T} = \max F$. Moreover, it follows from direct computations that 
\begin{equation}
    \partial_{\tau} \tilde{g} = \Bigl(\scal(\tilde{g})_{\sup} - \scal(\tilde{g})\Bigr)\tilde{g}, \;\; \tilde{g}(0) = g_{0}.
\end{equation}
It is also possible to invert the process and obtain a solution of the standard Yamabe flow, proving said relation.
This proves short time existence of the increasing curvature normalized Yamabe flow $\mbox{CYF}^{+}$ 
(and similarly $\mbox{CYF}^{-}$).

\section{Evolution of the scalar curvature along $\mbox{CYF}^{+}$} \label{evolutionsection}

\noindent We begin with an easy observation. 

\begin{lem} \label{conformlaplace}
    Let $M$ be any $m$-dimensional smooth manifold.  Given any two Riemannian metrics 
    $g$ and $\tilde{g}$ on $M$ related by a conformal transformation $\tilde{g} = u^{1/\eta}\cdot g$ for some 
    positive $u \in C^{2}(M)$, then for any $f \in C^{2}(M)$
    \begin{equation*}
        \Delta_{\tilde{g}}f = \dfrac{1}{u^{1/\eta}}\cdot \Delta_{g}f + \dfrac{2}{u^{1+1/\eta}}\cdot g(\nabla f, \nabla u).
    \end{equation*}
\end{lem}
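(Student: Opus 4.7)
The plan is to compute $\Delta_{\tilde g}f$ in local coordinates starting from the defining conformal relation $\tilde g = u^{1/\eta} g$, then simplify using the specific value $\eta = (m-2)/4$ implicit in the setup. First I would record how the inverse metric and the volume density transform: in any chart we have $\tilde g^{ij} = u^{-1/\eta} g^{ij}$ and $\sqrt{\det \tilde g} = u^{m/(2\eta)}\sqrt{\det g}$. Substituting these into the intrinsic coordinate expression
\begin{equation*}
\Delta_{\tilde g} f = \frac{1}{\sqrt{\det \tilde g}}\, \partial_i \bigl(\sqrt{\det \tilde g}\; \tilde g^{ij}\, \partial_j f\bigr),
\end{equation*}
the powers of $u$ outside and inside the derivative combine into a single factor $u^{(m-2)/(2\eta)}$ inside; here the identity $(m-2)/(2\eta) = 2$, coming from $\eta = (m-2)/4$, is used essentially.

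Next I would apply the Leibniz rule to $\partial_i\bigl(u^{2}\sqrt{\det g}\; g^{ij}\,\partial_j f\bigr)$. This produces two pieces: the term $u^{2}\,\partial_i\bigl(\sqrt{\det g}\; g^{ij}\,\partial_j f\bigr)$, which after multiplication by the outer $u^{-m/(2\eta)}$ and an application of the same coordinate formula (now for $\Delta_g$) collapses to $u^{-1/\eta}\,\Delta_g f$; and the term $2u(\partial_i u)\sqrt{\det g}\; g^{ij}\,\partial_j f$, which after the same rescaling and a routine power count $1 - m/(2\eta) = -1 - 1/\eta$ becomes $\tfrac{2}{u^{1+1/\eta}}\, g(\nabla u, \nabla f)$. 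Summing the two gives precisely the claimed identity.

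There is no genuine obstacle here; the statement is really a specialization of the standard conformal Laplacian formula $\Delta_{e^{2\phi} g} f = e^{-2\phi}\bigl(\Delta_g f + (m-2)\, g(\nabla \phi, \nabla f)\bigr)$ to $\phi = \tfrac{2}{m-2}\ln u$, so that the universal prefactor $(m-2)$ cancels against the $\tfrac{2}{m-2}$ appearing in $\nabla \phi = \tfrac{2}{(m-2)u}\nabla u$ and produces the clean coefficient $2$ in front of the gradient term. The only care required is bookkeeping the exponents of $u$, but this is entirely mechanical once $\eta = (m-2)/4$ is substituted. In the writeup I would present the derivation in its compact form via the general conformal formula, with a brief parenthetical pointer to the local-coordinate calculation for readers who prefer to see the exponents tracked explicitly.
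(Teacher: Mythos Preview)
Your proposal is correct and follows essentially the same route as the paper: both compute in local coordinates, record $\tilde g^{ij}=u^{-1/\eta}g^{ij}$ and $\sqrt{\det\tilde g}=u^{m/(2\eta)}\sqrt{\det g}$, substitute into the divergence-form Laplacian, and apply the Leibniz rule with $(m-2)/(2\eta)=2$ to split off the two terms. Your additional remark deriving the identity from the general conformal formula $\Delta_{e^{2\phi}g}f=e^{-2\phi}(\Delta_g f+(m-2)\,g(\nabla\phi,\nabla f))$ with $\phi=\tfrac{2}{m-2}\ln u$ is a nice alternative packaging, but the underlying computation is the same.
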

\begin{proof}
 First, note that $\tilde{g}^{-1} = u^{-1/\eta}\cdot g^{-1}$ and $\sqrt{|\det \tilde{g}|} = u^{m/2\eta}\cdot\sqrt{|\det g|}$.  Thus,
 we compute in local coordinates
\begin{align*} 
    \Delta_{\tilde{g}}f = &\dfrac{1}{\sqrt{|\det \tilde{g}}|}\cdot \displaystyle\sum_{j}\partial_{j}\left(\sqrt{|\det \tilde{g}|}\cdot \displaystyle\sum_{i}\tilde{g}^{ij}\cdot \partial_{i}f\right) \\
    = &\dfrac{1}{u^{m/2\eta}\cdot \sqrt{|\det g|}}\displaystyle\sum_{j}\partial_{j}\left(u^{(m-2)/2\eta}\cdot \sqrt{|\det g|}\cdot \displaystyle\sum_{i}g^{ij}\cdot \partial_{i}f\right) \\
    = &\dfrac{1}{u^{1/\eta}\cdot \sqrt{|\det g|}}\cdot \displaystyle\sum_{j}\partial_{j}\left(\sqrt{|\det g|}\cdot \displaystyle\sum_{i}g^{ij}\cdot \partial_{i}f\right) \\
    &+ \dfrac{2}{u^{1+1/\eta}}\cdot \displaystyle\sum_{i,j}g^{ij}\cdot \partial_{j} u\cdot \partial_{i}f 
    = \dfrac{1}{u^{1/\eta}}\cdot \Delta_{g}f + \dfrac{2}{u^{1+1/\eta}}\cdot g(\nabla f, \nabla u).
    \end{align*}
\end{proof}

\noindent Note that the increasing curvature normalized Yamabe flow \eqref{CYF} can be rewritten as
(recall $\scal(g)_{\sup}$ denotes the supremum of $\scal(g)$)
\begin{equation}\label{simple-u-eq}
\frac{1}{\eta}\partial_t u = \Bigl(\scal(g)_{\sup} - \scal(g)\Bigr) u.
\end{equation}
From here we conclude immediately
\begin{equation} \label{eq2}
	\begin{split}
	\frac{1}{\eta} \partial_{t}(u^{-1}\Delta_{g_{0}}u) &= - \frac{1}{\eta} u^{-2} \, \partial_{t}u \cdot \Delta_{g_{0}}u + 
	\frac{1}{\eta} u^{-1} \Delta_{g_{0}} (\partial_{t}u) \\
	&= -u^{-1}\bigl(\scal(g)_{\sup}-\scal(g)\bigr) \cdot \Delta_{\Phi}u \\
	&+ u^{-1} \Delta_\Phi \Bigl( \bigl(\scal(g)_{\sup}-\scal(g)\bigr) u \Bigr)
	\\ &=  u^{-1} \Bigl( \scal(g) \cdot \Delta_{g_{0}}u - \Delta_\Phi \bigl( \scal(g) u\bigr) \Bigr).
	\end{split}
\end{equation}
Moreover, from Lemma \ref{conformlaplace} we obtain
\begin{align*}
    u^{-1}\Delta_{g_{0}}(\scal(g) u) = & u^{-1}\scal(g)\Delta_{g_{0}}u+ \Delta_{\Phi}\scal(g) +2u^{-1}g_{0}(\nabla u,\nabla \scal(g)) \\
    = & u^{-1}\scal(g)\Delta_{g_{0}}u + u^{1/\eta}\Delta_{g}\scal(g),
\end{align*}
where $\Delta_g$ is the negative Laplace Beltrami operator of the conformally
transformed metric $g=u^{1/\eta}\cdot g_{0}$.
Combined with (\ref{eq2}) this gives
\begin{equation} \label{eq3}
    \dfrac{1}{\eta}\partial_{t}(u^{-1}\Delta_{g_{0}}u) = -u^{1/\eta}\Delta_{g}\scal(g).
\end{equation}
On the other hand, from \eqref{CYF} it is also straightforward that 
\begin{equation} \label{eq4}
    \partial_{t}u^{-1/\eta} = u^{-1/\eta}(\scal(g)-\scal(g)_{\sup}).
\end{equation}  
Finally, combining \eqref{eq2} and \eqref{eq4} with the transformation formula for the scalar curvature, cf. \eqref{scal-trafo}
\begin{equation} \label{conftrans}
    \scal(g(t)) = \scal(u^{1/\eta} g_{0}) =  -u^{-1/\eta}\left[\dfrac{m-1}{\eta}u^{-1}\Delta_{g_{0}} u - \scal(g_{0}) \right],
\end{equation}
it provides us the expression
\begin{equation} \label{derivscalar}
    \partial_{t}\scal(g) = (m-1)\Delta_{g}\scal(g) + \scal(g)(\scal(g)-\scal(g)_{\sup}).
\end{equation}
Based on \eqref{derivscalar}, we can now prove the following

\begin{lem} \label{evolutionscalar}
    Suppose $\scal(g_{0}) \in C^{4,\alpha}(M)$ is negative and bounded away from 
    zero\footnote{In fact, boundedness away from zero for the scalar curvature will only become important
    in the next section, but we list it here as a condition for consistency.}, 
    that is, there are constants $a_{1},a_{2} >0$ such that 
    \begin{equation}
        -\infty < -a_{1} \leq \scal(g_{0}) \leq -a_{2} < 0.
    \end{equation}
    Then along $\mbox{CYF}^{+}$ with positive solution $u \in C^{4,\alpha}(M\times [0,T])$,
supremum of the the scalar $\scal(g(t))_{\sup}=\displaystyle\sup_{M} \scal(g(t))$ is non-increasing. 
\end{lem}

\begin{proof}
By Theorem \ref{theoremshorttime}, $\mbox{CYF}^{+}$ exists for short time in $C^{4,\alpha}(M \times [0,T])$.
From the transformation rule of the scalar curvature \eqref{conftrans}, it follows that $\scal(g) \in C^{2,\alpha}(M\times [0,T])$. 
Applying the arguments of \S \ref{omori} to $\scal(g)$, we conclude from \eqref{derivscalar} by Proposition \ref{envelope}, similar to 
Corollary \ref{diff-ineq}, for almost all $t \in [0,T]$
\begin{equation}\label{maxevolut}
        \partial_{t} \scal(g(t))_{\sup} \leq \scal(g(t))_{\sup}(\scal(g(t))_{\sup} - \scal(g(t))_{\sup}) = 0.
    \end{equation}
This implies directly that $\scal(g)_{\sup}$ is non-increasing along $\mbox{CYF}^{+}$.
    \end{proof}

Knowing that the supremum of the scalar curvature is non-increasing in time, 
the next results shows that the scalar curvature approaches its supremum at an exponential rate.

\begin{lem} \label{exponentialapproxim}
Suppose $\scal(g_{0}) \in C^{4,\alpha}(M)$ is negative, bounded away from zero
as in Lemma \ref{evolutionscalar}. Then along $\mbox{CYF}^{+}$
with a positive solution $u \in C^{4,\alpha}(M\times [0,T])$
\begin{equation*}
        \|\scal(g(t))_{\inf} - \scal(g(t))_{\sup}\|_{\infty} \leq Ce^{\scal(g_{0})_{\sup}t},
    \end{equation*}
with $C>0$ a constant independent of $T$, where $\scal(g(t))_{\inf} := \inf_M \scal(g(t))$.
\end{lem}

\begin{proof}
Applying the arguments of \S \ref{omori} to $\scal(g)$, we conclude from \eqref{derivscalar} by Proposition \ref{envelope}, similar to 
Corollary \ref{diff-ineq}, for almost all $t \in [0,T]$
\begin{equation} \label{minevolut}
        \partial_{t}\scal(g)_{\inf} \geq \scal(g)_{\inf}(\scal(g)_{\inf} - \scal(g)_{\sup}). 
 \end{equation}
From here it follows that $\scal(g)_{\inf}$ is non-decreasing along the $\mbox{CYF}^{+}$. 
Combining \eqref{minevolut} with \eqref{maxevolut}, we find
    \begin{align*}
        \partial_{t}(\scal(g)_{\sup}-\scal(g)_{\inf}) &\leq - \scal(g)_{\inf}(\scal(g)_{\inf}-\scal(g)_{\sup}) \\
        &= \scal(g)_{\inf}(\scal(g)_{\sup}-\scal(g)_{\inf})\\
        &\leq \scal(g_{0})_{\sup}(\scal(g)_{\sup}-\scal(g)_{\inf}).
    \end{align*}
    Integrating both sides of the last inequality gives 
    \begin{equation}
        (\scal(g)_{\sup}-\scal(g)_{\inf})(t) \leq Ce^{\scal(g_{0})_{\sup}t},
    \end{equation}
    where $C$ depends only on the initial data.  This means that the difference between the supremum and the infimum 
    of the scalar curvature decreases exponentially along the flow.  Consequently, the scalar curvature approaches 
    $\scal(g)_{\sup}$ at an exponential rate too, therefore implying the desired outcome.
\end{proof}

\section{Uniform estimates along $\mbox{CYF}^{+}$} \label{uniformsection}

\noindent We start immediately with the central result of the section. If we assume $\scal(g_{0}) \in C^{4,\alpha}(M)$, then 
the solution $u \in C^{4,\alpha}(M\times [0,T'])$ of $\mbox{CYF}^{+}$ exists by Theorem \ref{theoremshorttime} for $T'>0$ sufficiently small.
Assume $u$ in fact exists in $C^{4,\alpha}(M\times [0,T))$ on a larger time interval $[0,T)$ with maximal time $T\geq T'$. Then even in the
maximal time interval $[0,T)$ we obtain $T$-independent a priori estimates.

\begin{thm}\label{uniformboundu}
    Assume $\scal(g_{\Phi}) \in C^{4,\alpha}(M)$ is negative and bounded away from zero as in Lemma \ref{exponentialapproxim}. 
    Let $u \in C^{4,\alpha}(M\times [0,T))$ to be the solution of $\mbox{CYF}^{+}$ extended to a maximal time interval $[0,T)$.
    Then there exist constants $c_{1},c_{2}>0$, depending on $u(0),\sup |\scal(g_{0})|$ and $\inf|\scal(g_{0})|$, 
    and independent of $T$, such that 
    \begin{equation*}
        0<c_{1}\leq u(p,t) \leq c_{2}, \;\; \mbox{for all} \;\; (p,t) \in M\times [0,T).
    \end{equation*}
\end{thm}

\begin{proof}
First, we consider the flow for a short time interval $[0,T']$, where $u$ is guaranteed to be positive. 
The estimates below will show that $u$ stay positive, bounded away from zero uniformly on $[0,T']$
and thus all of the arguments hold on the maximal interval $[0,T)$. By the differential inequalities
in Proposition \ref{diff-ineq} we have (a priori almost everywhere on $[0,T']$, however as just explained a posteriori 
almost everywhere on the full time interval)
  \begin{equation}
  \begin{split}
       &\frac{\partial}{\partial t} u_{\inf}
       \geq \eta \sup_M \scal(g(t)) \cdot u_{\inf} + \eta \inf_M |\scal(g_{0})| \cdot u_{\inf}^{1-1/\eta}, \\
       &\frac{\partial}{\partial t} u_{\sup}
       \leq \eta \sup_M \scal(g(t)) \cdot u_{\sup} + \eta \sup_M |\scal(g_{0})| \cdot u_{\sup}^{1-1/\eta}.
\end{split}
   \end{equation}
Multiplying both sides of the first inequality by $\frac{1}{\eta} u^{1/\eta-1}_{\inf}$, 
and of the second inequality by $\frac{1}{\eta} u^{1/\eta-1}_{\sup}$, we obtain
  \begin{equation}\label{w12}
  \begin{split}
          &\frac{\partial}{\partial t} u^{1/\eta}_{\inf}
       \geq \sup_M \scal(g(t)) \cdot u^{1/\eta}_{\inf} + \inf_M |\scal(g_{0})|, \\
       &\frac{\partial}{\partial t} u^{1/\eta}_{\sup}
       \leq \sup_M \scal(g(t)) \cdot u^{1/\eta}_{\sup} + \sup_M |\scal(g_{0})|.
            \end{split}
   \end{equation}
Write $\omega_{1} := u^{1/\eta}_{\inf}$ and $\omega_{2} := u^{1/\eta}_{\sup}$. 
We obtain from \eqref{w12}
  \begin{equation}\label{w123}
  \begin{split}
          &\frac{\partial}{\partial t} \omega_1
       \geq \inf_M \scal(g_{0}) \cdot \omega_1 + \inf_M |\scal(g_{0})| =: b\omega_{1} + a, \\
       &\frac{\partial}{\partial t} \omega_2
       \leq \sup_M \scal(g_{0}) \cdot \omega_2 + \sup_M |\scal(g_{0})| =: B\omega_{2} + A,
            \end{split}
   \end{equation}
where in the first inequality we used the fact that by \eqref{minevolut} $\scal(g)_{\inf}$ is non-decreasing in time, 
while the second inequality from Lemma \ref{evolutionscalar}, since $\scal(g)_{\sup}$ is non-increasing in time.  
\medskip

The first inequality is equivalent to $(e^{-bt}\omega_{1})'\geq ae^{-bt}$.  Hence, integration on both sides over $[0,t]$ gives 
the following estimate
    \begin{align*}
        &\omega_{1}(t) \geq e^{bt}\omega_{1}(0) + \dfrac{a}{b}(e^{bt}-1) \\
        \Longleftrightarrow \quad &u^{1/\eta}_{\inf}(t) \geq u^{1/\eta}_{\inf}(0)e^{ \inf_M \scal(g_{0}) \cdot t}
         + \dfrac{ \inf_M |\scal(g_{0})|}{\inf_M \scal(g_{0})}\;(e^{ \inf_M \scal(g_{0}) \cdot t}-1)
        \\  \Longleftrightarrow \quad &u^{1/\eta}_{\inf}(t) \geq u^{1/\eta}_{\inf}(0)e^{ \inf_M \scal(g_{0}) \cdot t}
         + \dfrac{ \inf_M |\scal(g_{0})|}{\sup_M |\scal(g_{0})|}\;(1-e^{\inf_M \scal(g_{0}) \cdot t}) 
    \end{align*}
Hence, by setting the right-hand side as a function $f(t)$, it follows that $u^{1/\eta}_{\inf}(t) \geq f(t)$, with 
\begin{align*}
    &f(t) := c' + (c-c') e^{-|d|\cdot t},\;\; \mbox{with}\\
    &c = u^{1/\eta}_{\inf}(0),\;\; c'=\dfrac{ \inf_M |\scal(g_{0})|}{\sup_M |\scal(g_{0})|} \;\; \mbox{and}\;\; d = \scal(g_{0})_{\inf}.
\end{align*}
If $c\geq c'$ then $f(t) \geq c'$, since $e^{-|d|\cdot t}\geq 0$.  On the other hand, if $c \leq c'$ then it follows that $f(t) = c' + (c'-c)(-e^{-|d|\cdot t}) \geq c$, since $(-e^{-|d|\cdot t}) \geq -1$.  Hence, 
\begin{equation*}
    u^{1/\eta}_{\inf}(t) \geq f(t) \geq \min\left\{u^{1/\eta}_{\inf}(0),\dfrac{ \inf_M |\scal(g_{0})|}{\sup_M |\scal(g_{0})|}\right\}>0.
\end{equation*}
This yields \textit{a priori} positive lower bound for $u$.  On the other hand, if $c = c'$, then it follows straightforwardly that $u^{1/\eta}_{\inf}(0)$ is \textit{a priori} lower bound for $u^{1/\eta}_{\inf}(t)$.
Now, let us turn our attention to the second equation in \eqref{w123}. 
This inequality is equivalent $(e^{Bt}\omega_{2})' \leq Ae^{-Bt}$, which after integrating over $[0,t]$ implies 
    \begin{align*}
       &\omega_{2}(t) \leq \omega_{2}(0)e^{Bt} - \dfrac{A}{B}(1-e^{Bt}) \leq \omega_{2}(0)   \dfrac{A}{B}(1-e^{Bt}) \\
        \Longleftrightarrow \quad &u^{1/ \eta}_{\sup}(t) \leq u^{1/\eta}_{\sup}(0) + \dfrac{\sup_M |\scal(g_{0})|}{\inf_M |\scal(g_{0})|}\;(1-e^{\sup_M \scal(g_{0}) \cdot t})
    \end{align*}
Proceeding along the lines of the estimate for $u^{1/\eta}_{\inf}(t)$, consider the right-hand side as a function $F(t)$, where 
\begin{align*}
    &F(t) := C + C'(1 - e^{-|D|\cdot t}),\;\; \mbox{with}\\ &C = u^{1/\eta}_{\sup}(0),\;\; C' = \dfrac{\sup_M |\scal(g_{0})|}{\inf_M |\scal(g_{0})|} \;\;\mbox{and}\;\; D = \scal(g_{0})_{\sup}.
\end{align*}
Note that $- e^{-|D|\cdot t} \leq 0$.  Hence $F(t) \leq C + C'$ and, therefore, 
\begin{equation*}
    u^{1/\eta}_{\sup}(t) \leq F(t) \leq  u^{1/\eta}_{\sup}(0) + \dfrac{\sup_M |\scal(g_{0})|}{\inf_M |\scal(g_{0})|} < +\infty,
\end{equation*}
concluding the proof.
\end{proof}

\begin{prop} \label{uniformboundpartialu}
   Assume $\scal(g_{0}) \in C^{4,\alpha}(M)$ is negative and bounded away from zero as in Lemma \ref{exponentialapproxim}. 
   Let $u \in C^{4,\alpha}(M\times [0,T))$ to be the solution of $\mbox{CYF}^{+}$ extended to a maximal time interval $[0,T)$.  
   Then there exists a constant $C>0$, depending on $u(0),\sup |\scal(g_{0})|$ and $\inf|\scal(g_{0})|$, 
    and independent of $T$, such that
    \begin{equation}
        \|\partial_{t}u\|_{\infty} \leq Ce^{\sup_M \scal(g_{0}) \cdot t}.
    \end{equation}
\end{prop}

\begin{proof}
The $\mbox{CYF}^{+}$ flow \eqref{CYF} can be rewritten as (cf. \eqref{simple-u-eq})
\begin{equation}
\frac{1}{\eta}\partial_t u = \bigl(\scal(g)_{\sup} - \scal(g)\bigr) u.
\end{equation}
Then, employing Lemma \ref{exponentialapproxim} and Theorem \ref{uniformboundu}, it follows directly that 
    \begin{align*}
        \|\partial_{t}u\|_{\infty} &\leq |\eta|\|\scal(g)_{\sup}-\scal(g)\|_{\infty} \|u\|_{\infty} \\
        &\leq C e^{\sup_M \scal(g_{0}) \cdot t}.
    \end{align*}
    \end{proof}
    
Now we are ready to convert the a priori estimates in Theorem \ref{uniformboundu}
into uniform H\"older regularity on $[0,T]$, where $[0,T)$ is the maximal time interval, 
where the $\mbox{CYF}^{+}$ flow solution $u$ exists in $C^{4,\alpha}(M\times [0,T))$.  In fact, recall that the solution $u$ of the $\mbox{CYF}^{+}$ must satisfy the equation in \eqref{CYF-u}, that is,
\begin{align*}
&\partial_{t}u(t) - (m-1)u(t)^{-1/\eta}\Delta_{g_{0}} u(t) \\ 
&= \eta \Bigl(\sup_M \scal(g(t)) \cdot u(t) - \scal(g_{0})u(t)^{1-1/\eta}\Bigr) =: f.
\end{align*}
Proceeding similarly to the proof of Proposition \ref{Q-mapping-properties}, we can employ once again the argument to local $\delta$-balls and write
\begin{align}\label{local-heat}
\Bigl(\partial_{t} - a \cdot \widetilde{\Delta_{g_{0}}}\Bigr) \Psi^* u =  \Psi^* f,
\end{align}
where $a:= (m-1)\Psi^{*}u^{-1/\eta}$.  Under the assumption that $\scal(g_{0}) \in C^{4,\alpha}(M)$ and is negative and bounded away from zero, it follows from Theorem \ref{uniformboundu} and Proposition \ref{uniformboundpartialu} that $a$ is bounded in $Q_{\delta}$.  Thus, the same argument as the one presented in Proposition \ref{Q-mapping-properties} implies that $u \in C^{\alpha}(M\times [0,T])$.  Moreover, the estimate for $\|u\|_{\alpha}$ does not depend on the maximal time $T$.  Hence, we conclude

\begin{prop}\label{krylov-safonov-lemma}
Assume $\scal(g_{0}) \in C^{4,\alpha}(M)$ is negative and bounded away from zero. 
Let $u \in C^{4,\alpha}(M\times [0,T))$ be the solution of $\mbox{CYF}^{+}$ extended to a maximal time interval $[0,T)$. 
Then $u \in C^{\alpha}(M\times [0,T])$ with $T$-independent H\"older norm.
\end{prop}

This first gain in H\"older regularity can now be converted into higher order regularity
by standard parabolic Schauder estimates as in Proposition \ref{Q-mapping-properties}.
\begin{prop}\label{uniform-regularity}
Assume $\scal(g_{0}) \in C^{4,\alpha}(M)$ is negative and bounded away from zero. 
Let $u \in C^{4,\alpha}(M\times [0,T))$ be the solution of $\mbox{CYF}^{+}$ extended to a maximal time interval $[0,T)$. 
Then $u \in C^{4,\alpha}(M\times [0,T])$ with $T$-independent H\"older norm.
\end{prop}

\begin{rmk}\label{24}
Note that the argument from Proposition \ref{Q-mapping-properties} shows that in fact, if $\scal(g_{0}) \in C^{k,\alpha}(M)$ with $k \geq 4$ is negative and bounded away from zero,
the $\mbox{CYF}^{+}$ flow solution $u \in C^{2,\alpha}(M\times [0,T])$ on any time interval $[0,T]$ is in fact in $C^{k,\alpha}(M\times [0,T])$.
\end{rmk}

\section{Global existence of the $\mbox{CYF}^{+}$} \label{longtimesection}

We prove global existence of the flow, i.e. $u \in C^{4,\alpha}(M\times [0,\infty))$ by a contradiction. 
Assume the maximal time $T>0$ is finite. In that case we will now restart the flow at $t=T$, which contradicts
maximality of $T$. Restarting the flow at $t=T$ means constructing a solution $u'$ to the (unnormalized) Yamabe flow 
equation \eqref{yf-u} with initial condition $u'(0) = u(T)$. A rescaling of the time function, as in \S \ref{renormalization},
yields short time existence of the curvature normalized Yamabe flow. \medskip

Let us simplify notation by writing $u_0 =u(T)$ and $\Delta=\Delta_{g_{0}}$.
We linearize \eqref{yf-u} by setting $u'= u_0 + v$ for its solution with initial condition $u'(0) = u_0$.
We obtain from the second equation in \eqref{yf-u} 
 \begin{equation}\label{linearized-at-T}
        \left(\partial_{t}-(m-1)u_0^{-1/\eta}\Delta\right)v = F_{1}(v) + F_{2}(v); \;\; v|_{t=0} = 0,
    \end{equation}
    where we have abbreviated 
    \begin{align*}
            F_{1}(v) = Q_2(v), \quad
            F_{2}(v) &= (m-1)u_{0}^{-1/\eta}\Delta u_{0} 
            - \scal(g_{0})u_{0}^{1-1/\eta} + Q_1(v),
    \end{align*}
The terms $Q_1(v)$ include linear combinations of $v$ with coefficients given in terms of  
$u_{0}$ and $\Delta u_{0}$. The terms $Q_2(v)$ include quadratic combinations
of $v$ and $\Delta v$ with coefficients given again in terms of $u_{0}$ and $\Delta u_{0}$.
\medskip

Note that by Proposition \ref{uniform-regularity}, $u_0 \in C^{4,\alpha}(M)$.
Thus, $F_1$ contains quadratic combinations of $v$ and $\Delta v$, and $F_2$ 
$-$ linear combinations of $v$; with coefficients being in both cases elements of $C^{2,\alpha}(M \times [0,T'])$.
\medskip

Before we can establish short time existence of $v$ by setting up a fixed point as in \S  \ref{shorttime}, 
we note a general result from parabolic Schauder theory. This is basically a non-constructive analogue of
\cite[\S 4]{bahuaud2019long}.

\noindent We can now conclude with the proof of Theorem \ref{longtime}.

\begin{cor}
Assume $\scal(g_{0}) \in C^{k,\alpha}(M)$ is negative and bounded away from zero with $k\geq 4$. 
Then the increasing curvature normalized Yamabe flow $\mbox{CYF}^{+}$ exists for all times with 
conformal factor $u \in C^{k,\alpha}(M\times [0,\infty))$.
\end{cor}

\begin{proof}
Using Proposition \ref{Q-mapping-properties}, we can construct a solution 
$v\in C^{2,\alpha}(M \times [0,T'])$ to \eqref{linearized-at-T} for some $T'>0$ sufficiently
small, as a fixed point of 
\begin{equation}
Q \circ (F_1+F_2): C^{2,\alpha}(M\times [0,T']) \rightarrow C^{2,\alpha}(M\times [0,T']),
\end{equation}
in the same way as in \S \ref{shorttime}. Rescaling time as in  \S \ref{renormalization},
we obtain a solution $u \in C^{2,\alpha}(M\times [0,T + \varepsilon])$ to $\mbox{CYF}^{+}$,
with $\varepsilon > 0$ sufficiently small. Finally, the arguments of Proposition \ref{uniform-regularity},
cf. Remark \ref{24} imply that $u \in C^{k,\alpha}(M\times [0,T + \varepsilon])$ with $T$-independent H\"older norm. This contradicts
maximality of $T>0$ and hence the flow exists for all times. 
\end{proof}

\section{Convergence of the $\mbox{CYF}^{+}$}\label{section-convergence}

In this last section, we will assume the manifold with bounded geometry $(M,g)$ to be the open interior of a compact manifold with boundary $\overline{M}$.  This means there is a global defining function $x \in C^{\infty}(\overline{M})$ such that
$$\partial \overline{M} = \setdef{p \in M}{ x(p) = 0}$$
and $\di x \neq 0$ on $\partial \overline{M}$.  In order to present the convergence of the $\mbox{CYF}^{+}$, we must use a compact embedding of (weighted) H\"older spaces, where the weight is defined in terms of the function $x$.

\begin{defin}
The weighted H\"older space $x^\gamma C^{k,\alpha}(M)$ is defined as the space of functions 
$u= x^\gamma v$ with $v \in C^{k,\alpha}(M)$ and the norm $\|u\|_{k,\alpha,\gamma} := \|v\|_{k,\alpha}$.
\end{defin}

\noindent We now obtain the following compactness result. 

\begin{prop} \label{compactembed}
Consider any $0< \beta < \alpha < 1$ and $\gamma > 0$. 
Then the following inclusion is compact
   \begin{equation}
       \iota:C^{k,\alpha}(M) \hookrightarrow x^{-\gamma}C^{k,\beta}(M).
   \end{equation}
\end{prop}

\begin{proof}
    Let $\{u_{n}\}_{n}$ be a bounded sequence of functions in $C^{k,\alpha}(M)$ and, 
    for any $\delta >0$, let $M_{\delta}$ be the compact submanifold given by
    \begin{equation}
        M_{\delta} = M \, \backslash \, \setdef{p \in M}{x(p)<\delta}.
    \end{equation}
    We know that $C^{k,\alpha}(M_{\delta})\hookrightarrow C^{k,\beta}(M_{\delta})$ compactly 
    for any $\delta>0$. Therefore, $\{u_{n}|_{M_{\delta}}\}_{n}$ admits a subsequence 
    $\{u_{n_j(\delta)}|_{M_{\delta}}\}_{j}$ which converges in $C^{k,\beta}(M_{\delta})$. Now consider a 
    sequence $\delta_i := 1/i$ for $i \in \N$. We define convergent subsequences in $C^{k,\beta}(M_{\delta_i})$
    for any $i$ by an iterative procedure: given a convergent subsequence
    $\{u_{n_j(\delta_i)}|_{M_{\delta_i}}\}_{j} \subset C^{k,\beta}(M_{\delta_i})$,
    we choose a convergent subsequence $\{u_{n_j(\delta_{i+1})}|_{M_{\delta_{i+1}}}\}_{j} \subset C^{k,\beta}(M_{\delta_{i+1}})$
    from $\{u_{n_j(\delta_i)}|_{M_{\delta_{i+1}}}\}_{j}$. Define the diagonal sequence by
    \begin{equation}
        \{v_{j} := u_{n_j(\delta_j)}\}_{\, j}.
    \end{equation}
    We claim that $\{v_{j}\}_{j}$ is a Cauchy sequence in $x^{-\gamma}C^{k,\beta}(M).$  In fact
    \begin{align*}
        \|v_{j}\|_{x^{-\gamma}C^{k,\beta}(M\backslash M_{\delta_j})} 
        = \|x^{\gamma}v_{j}\|_{C^{k,\beta}(M\backslash M_{\delta_j})} 
        \leq C\delta_{j}^{\gamma}, 
    \end{align*}
    where $C>0$ is an upper bound for the norms of $\{u_{n}\}_{n} \subset C^{k,\beta}(M)$.
    Now, let $\varepsilon>0$ and choose $j_{0} \in \mathbb{N}$ sufficient large such that $C\delta_{j_{0}}^{\gamma} \leq \varepsilon/4$.  
    The sequence $\{v_j|_{M_{\delta_{j_0}}}\} \subset C^{k,\beta}(M_{\delta_{j_0}})$ converges by construction and thus converges 
    also in $x^{-\gamma}C^{k,\beta}(M_{\delta_{j_0}})$.  Hence, there exists some $N_{0} \in \mathbb{N}$ sufficiently large, such that 
    for every $j,j' \geq N_{0}$
    \begin{equation} \label{ineq1}
        \|v_j - v_{j'}\|_{x^{-\gamma}C^{k,\beta}(M_{\delta_{j_0}})} \leq \varepsilon/2,
    \end{equation}
Hence for $J_{0} = \max\{j_{0},N_{0}\}$, we have for any $j,j' \geq J_{0}$
    \begin{align*}
        \|v_{j}-v_{s}\|_{x^{-\gamma}C^{k,\beta}(M)} &\leq \|v_{j}-v_{s}\|_{x^{-\gamma}C^{k,\beta}(M_{\delta_{j_0}})} 
        + \|v_{j}-v_{s}\|_{x^{-\gamma}C^{k,\beta}(M \backslash M_{\delta_{j_0}})} \\
        &<\varepsilon/2 + 2 \varepsilon/2 = \varepsilon.
    \end{align*}
    Hence, $\{v_{j}\}$ is a Cauchy sequence in $x^{-\gamma}C^{k,\beta}(M)$ and
    by completeness, it admits a convergent subsequence. This proves the statement.
\end{proof}

\noindent We can finally prove convergence of the $\mbox{CYF}^{+}$ flow, i.e. Theorem \ref{convergence}. 

\begin{thm} \label{convergencelong}
Let $(M,g_{0})$ be a manifold with bounded geometry such that $\scal(g_{0}) \in C^{4,\alpha}(M)$ is negative and bounded away from zero.  
Consider the global solution $u \in C^{4,\alpha}(M\times \mathbb{R}_{+})$ of $\mbox{CYF}^{+}$.  Then the family of metrics $\{g(t) = u(t)^{1/\eta}g_{0}\}_{t\geq 0}$ converges to a metric $g^{*} = (u^{*})^{1/\eta}g_{0}$ with constant negative scalar curvature.
\end{thm}

\begin{proof} 
From Proposition \ref{uniformboundpartialu}, $\| \partial_t u(t)\|_\infty$ decreases exponentially. From there it is 
easy to check that $u(t) \in L^\infty(M)$ is a Cauchy sequence and hence admits a well-defined limit $u^* \in L^\infty(M)$. 
By Proposition \ref{compactembed}, $u(t) \in C^{4,\alpha}(M)$ admits a convergent subsequence in $x^{-\gamma}C^{4,\alpha}(M)$
for any $\beta< \alpha$ and $\gamma>0$. Hence $u^* \in x^{-\gamma}C^{4,\alpha}(M)$ with
scalar curvature $\scal^* \in x^{-\gamma}C^{2,\alpha}(M)$ such that for some divergent sequence $(t_n) \in \R_+$
   \begin{equation}
       \|\scal g(t_n)-\scal^*\|_{x^{-\gamma}C^{2,\alpha}(M)} \rightarrow 0 \;\; \mbox{for} \; n\rightarrow \infty.
   \end{equation}
In particular, $\scal g(t_n)$ converges pointwise to $\scal^*$. 
Note that by Lemma \ref{evolutionscalar} the supremum $\sup_M \scal g(t)$ is non-increasing and by \eqref{minevolut}
the infimum $\inf_M \scal g(t)$ is non-decreasing. Thus $\sup_M \scal g(t)$ and $\inf_M \scal g(t)$ are bounded from 
below and above, respectively, and thus both convergent as $t\to \infty$. By Lemma \ref{exponentialapproxim}
$$
\lim_{t\to \infty} \sup_M \scal g(t) = \lim_{t\to \infty} \inf_M \scal g(t) =: \textup{const}.
$$
We compute from pointwise convergence of 
$\scal g(t)$ to $\scal^*$ at any $p \in M$
\begin{equation}\label{limit-scal-upper}
\begin{split}
\scal^*(p) &= \lim_{n\to \infty} \scal g(t_n)(p) \leq \lim_{n\to \infty} \sup_M \scal g(t_n) 
\\ &\Rightarrow \sup_M \scal^* \leq \textup{const}.
\end{split} 
\end{equation}
Similar argument applied to the infimum of $\scal^*$ yields
\begin{equation}\label{limit-scal-lower}
\begin{split}
\scal^*(p) &= \lim_{n\to \infty} \scal g(t_n)(p) \geq \lim_{n\to \infty} \inf_M \scal g(t_n) 
\\ &\Rightarrow \inf_M \scal^* \geq \textup{const}.
\end{split} 
\end{equation}
Combining \eqref{limit-scal-upper} and \eqref{limit-scal-lower}, proves the statement. 
\end{proof}

\section{Appendix: Yamabe flow on $\Phi$-manifolds}

In this appendix we study short time existence of Yamabe flow in the specific class of $\Phi$-manifolds.  
There, we employ microlocal arguments to deduce stronger regularity statements for the flow. 
These techniques do not hold for a general manifold with bounded geometry.

\subsection{Fibered boundary manifolds}

Let $\overline{M} = M\cup \partial M$ be a compact $m$-dimensional smooth manifold whose boundary 
$\partial M$ is the total space of a fibration over a closed manifold $Y$ with typical fiber given by a 
closed manifold $Z$. We write $b:= \dim Y$ and $f:= \dim Z$ for the dimensions of the respective manifolds. 
We denote the fibration by
\begin{equation} \label{submersion}
    \phi: \partial M \rightarrow Y,
\end{equation}
which is a smooth surjective map such that $\phi^{-1}(\{y\}) =: Z_{y} \simeq Z$ for all $y \in Y$. 
Consider a collar neighborhood $\mathscr{U} \simeq [0,1)\times \partial M$ of the boundary $\partial M$
with a smooth boundary defining function $x:\mathscr{U} \to [0,1)$, i.e. $x^{-1}(\{0\}) = \partial M$ and $dx \neq 0$
at $\partial M$. We extend $x$ to a smooth nowhere vanishing function on $M$. 
We can now introduce a $\Phi$-metric in the open interior $M$.

\begin{defin} \label{phimanifolds}
A Riemannian metric $g_{\Phi}$ in the interior $M \subset \overline{M}$ of a fibered boundary manifold is 
said to be a $\Phi$-metric if in the collar $\mathscr{U}$ it can be written as
\begin{equation} 
g_{\Phi} = \dfrac{\di x^{2}}{x^{4}} + \dfrac{\phi^{*}g_{Y}}{x^{2}} + g_{Z} + h := g_{\Phi,0} + h,
\end{equation}
where $g_{Y}$ is a Riemannian metric on the base $Y$, $g_{Z}$ is a symmetric bilinear form on $\partial M$ 
which restricts to a Riemannian metric at each fiber $Z_{y}$, and the (higher order) term $h$ satisfies $|h|_{g_{\Phi,0}} = O(x)$ 
as $x\rightarrow 0$. We assume that 
$\phi: (\partial M, g_Z + \phi^{*}g_{Y}) \to (Z,g_Z)$ is a Riemannian submersion and 
call the Riemannian manifold $(M,g_{\Phi})$ a $\Phi$-manifold.
\end{defin}

Some examples of $\Phi$-metrics include the product of locally Euclidean metrics with a closed manifold.  Moreover, complete Ricci flat metrics are often $\Phi$-metrics and, furthermore, common classes of gravitational instatons are $\Phi$-metrics as well.  

Elliptic theory for $\Phi$-manifolds was first studied by Mazzeo and Melrose \cite{mazzeo1998pseudodifferential}, whose work was later extended by Grieser and Hunsicker \cite{grieserhuns1,grieserhuns2}.  Moreover, the development of Hodge theory on $\Phi$-manifolds is due to Hausel, Hunsicker and Mazzeo \cite{hhm} and Leichtnam, Mazzeo and Piazza \cite{lmp} have obtained results for Index theory on $\Phi$-manifolds.  More recently, one should mention the work by Talebi and Vertman \cite{VerTal} which provides a description of the asymptotic behavior of the heat kernel on $\Phi$-manifolds.

\subsection{Geometry-adapted H\"older spaces}\label{section-hoelder-def}
We define the space $\mathcal{V}_{\Phi}$ of $\Phi$-vector fields as the space of smooth vector fields that are bounded under $g_\Phi$.
Choose local coordinates $(x,y,z)$ on $\mathscr{U}$, where $(y)$ restricts to local coordinates on $\partial M$, lifted from $Y$ and $(z)$ restricts to local coordinates on each fibre $Z$. Then, locally, $\mathcal{V}_{\Phi}$ can be written as
\begin{equation} \label{phivectors}
    \mathcal{V}_{\Phi} = C^\infty(\overline{M}) - \Span \, \{x^{2}\partial_{x},x\partial_{y_{1}},...,
    x\partial_{y_{b}},\partial_{z_{1}},...,\partial_{z_{f}}\}.
\end{equation}
The universal enveloping algebra of $\mathcal{V}_{\Phi}$ is the ring $\mbox{Diff}^{*}_{\Phi}(M)$ 
of differential $\Phi$-operators. We denote by $\mathcal{V}^{l}_{\Phi}$ a set of generators for
$\mbox{Diff}^{l}_{\Phi}(M)$. By doing so, we are able to define the class of $k$-continuously $\R$-valued 
$\Phi$-differentiable functions 
\begin{equation}\begin{split}
     C^{k}_{\Phi}(M\times [0,T]) = 
    \biggsetdef{ u \in C^{0}(M\times [0,T])} 
        {\begin{array}{l}  (V \circ \partial^{l_{2}}_{t})u \in C^{\alpha}_{\Phi}(M\times [0,T]),
           \\
        \mbox{for} \; V \in \mbox{Diff}^{l_1}_{\Phi}(M), \hspace{2mm} l_{1} + 2l_{2} \leq k 
        \end{array}}.
\end{split}\end{equation}
Naturally, we can also define the class of geometry-adapted H\"older spaces by considering the class of $\Phi$-derivatives instead of regular derivatives and, moreover, by considering the distance function induced by $g_{\Phi}$, which is locally given by the following expression:
\begin{equation*} 
    d((x,y,z),(x',y',z')) = \sqrt{\left(\frac{x-x'}{(x+x')^{2}}\right)^{2}+
    \left(\frac{\|y-y'\|}{(x+x')} \right)^{2} + \|z-z'\|^2}.
\end{equation*}
Hence, we define
\begin{equation*}
    C^{k,\alpha}_{\Phi}(M\times [0,T]) = 
    \biggsetdef{ u \in C^{k}_{\Phi}(M\times [0,T])} 
        {\begin{array}{l}  (V \circ \partial^{l_{2}}_{t})u \in C^{\alpha}_{\Phi}(M\times [0,T]),
           \\
        \mbox{for} \; V \in \mbox{Diff}^{l_1}_{\Phi}(M), \hspace{2mm} l_{1} + 2l_{2} \leq k 
        \end{array}}
\end{equation*}
which is a Banach space, cf. \cite[Proposition 3.1]{bahuaud2014yamabe}, with the norm
\begin{equation}
\|u\|_{k,\alpha} := \|u\|_{\alpha} +\displaystyle\sum_{l_{1} + 2l_{2} \leq k} 
\sum_{V \in \mathcal{V}^{l_{1}}_{\Phi}} \|(V\circ \partial_{t}^{l_{2}})u\|_{\alpha}.
\end{equation}

\subsection{Conformal transformation by H\"{o}lder functions} 

Since the Yamabe flow preserves the conformal class of the metric, we need to look into 
the effect of conformal transformation by H\"older functions. We first define
the conformal class of a $\Phi$-metric $g_\Phi$ (we tacitly assume $m\geq 3$)
\begin{equation} \label{conformalclass}
    [g_{\Phi}] = \bigsetdef{u^{4/(m-2)}\cdot g_{\Phi}}{u \in C^{2}_{\Phi}(M), \;\; \displaystyle\inf_{M}u > 0, \;\;\|u\|_{\infty}<+\infty}.
\end{equation}
First, observe that a generic element of the conformal class $ [g_{\Phi}] $ is \emph{not}
a $\Phi$-metric in the sense of Definition \ref{phimanifolds}, since the conformal 
factor $u^{4/(m-2)}$ cannot, in general, be expected to admit a partial asymptotic 
expansion as $x\to 0$. However, it still has $\mathcal{V}_\Phi$ as the space of bounded 
vector fields and thus the distance functions defined with respect to any $g \in [g_{\Phi}]$
are equivalent. In that sense $g$ still has the same $\Phi$-geometry as $g_\Phi$ and we conclude

\begin{prop}
 The H\"older spaces defined in \S \ref{section-hoelder-def} 
 do not depend on the choice of a metric $g \in [g_{\Phi}]$.
\end{prop}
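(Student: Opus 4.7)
The plan is to show that any two metrics in $[g_{\Phi}]$ induce bi-Lipschitz equivalent distance functions and the same space of bounded vector fields; once this is established, the Hölder seminorms defined with respect to one coincide, up to multiplicative constants, with those defined with respect to the other, so the Hölder spaces are identical as normed vector spaces (up to equivalence).

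First I would fix $g = u^{4/(m-2)} g_{\Phi}$ with $u \in C^{2}_{\Phi}(M)$ and $u > 0$. The membership $u \in C^{2}_{\Phi}(M)$ gives $u \in L^{\infty}(M)$, and together with the positivity assumption (understood in the uniform sense $\inf_{M} u > 0$ implicit in the conformal class \eqref{conformalclass}, as is natural for the Yamabe flow developed later in the paper), the factor $v := u^{4/(m-2)}$ satisfies $c_{1} \leq v \leq c_{2}$ for constants $0 < c_{1} \leq c_{2}$. Since $|V|_{g}^{2} = v \cdot |V|_{g_{\Phi}}^{2}$ for every vector $V \in TM$, a smooth vector field is $g_{\Phi}$-bounded if and only if it is $g$-bounded. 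Therefore $\mathcal{V}_{\Phi}$ coincides with the space of bounded vector fields of $g$, and consequently $\mathrm{Diff}^{l}_{\Phi}(M)$ and its generating set $\mathcal{V}^{l}_{\Phi}$ are the same whether defined via $g_{\Phi}$ or $g$.

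Next I would integrate the pointwise metric comparison along curves to obtain the global distance comparison $\sqrt{c_{1}}\, d_{g_{\Phi}} \leq d_{g} \leq \sqrt{c_{2}}\, d_{g_{\Phi}}$. Combined with Lemma \ref{local-norm}, this shows that the local Hölder seminorm $[\,\cdot\,]_{\alpha}$ computed with respect to $g$ differs from the one computed with respect to $g_{\Phi}$ by a multiplicative constant depending only on $c_{1}, c_{2}, \alpha$, so $C^{\alpha}_{\Phi}(M\times[0,T])$ and its norm are independent of $g \in [g_{\Phi}]$. The higher Hölder spaces $C^{k,\alpha}_{\Phi}$ then inherit this equivalence because their norm is a finite sum of terms $\|(V \circ \partial_{t}^{l_{2}})u\|_{\alpha}$ over $V \in \mathcal{V}^{l_{1}}_{\Phi}$, and both the generating set and the underlying $\alpha$-norm have already been shown to be metric-independent.

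The only subtle point is the uniform positive lower bound on $u$, which on a non-compact manifold does not follow automatically from pointwise positivity together with $u \in C^{2}_{\Phi}(M)$. I would address this by reading the condition $u > 0$ in \eqref{conformalclass} as $\inf_{M} u > 0$; this is the setting relevant for the rest of the paper, since the a priori estimates for $\mathrm{CYF}^{+}$ in \S \ref{uniformsection} provide precisely such uniform positivity along the flow. Everything else is direct verification and requires no new idea beyond bi-Lipschitz invariance of Hölder conditions.
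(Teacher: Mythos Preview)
Your proposal is correct and follows essentially the same approach as the paper: the paper's argument, contained in the paragraph immediately preceding the proposition, is precisely that the conformal factor is bounded above and below, hence $\mathcal{V}_{\Phi}$ remains the space of bounded vector fields for any $g\in[g_{\Phi}]$, so the induced distance functions are equivalent and the H\"older spaces coincide. You have written this out in more detail and are more explicit than the paper about the need for a uniform lower bound $\inf_M u>0$ (which the paper leaves implicit), but the underlying idea is identical.
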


\subsection{Scalar curvature of $(M,g_{\Phi})$}

In this work we assume that the scalar curvature $\scal(g_{\Phi})$ of $(M,g_{\Phi})$ is negative and bounded 
uniformly away from zero. In order to understand the geometric restrictions it entails, we should consider 
the asymptotic expansion of $\scal(g_{\Phi})$ near the boundary $\partial M$. Employing \cite[Chapter 7, Corollary 43]{oneill} 
and the tensor properties of the scalar curvature, we obtain for the trivial fibration and vanishing higher order term $h$
\begin{equation}\label{curvature}
    \scal(g_{\Phi,0}) = x^{2}(\scal(g_{Y}) + b(b-1)) + \scal(g_{Z}),
\end{equation}
which reflects the behavior of $\scal(g_{\Phi})$ near $x=0$. In the general case, additional $O(x)$ terms (as $x\to 0$) appear.

\subsection{Bounded geometry of $(M,g_{\Phi})$}

We note that $\Phi$-manifolds are a particular case of manifold with bounded geometry.  Therefore, all results obtained in this work hold, in particular, on $(M,g_{\Phi})$.  In fact, from \cite[Chapter 7, Corollary 43]{oneill} and omitting the indexes, it follows that
\begin{equation}
    \ric_{\Phi,0} = \Bigl(\ric_{Y} + (b-1)g_{Y}\Bigr) + \ric_{Z},
\end{equation}
which reflects the behavior of $\ric_{\Phi}$ near $x = 0$.  Moreover, the sectional curvature $K_{M}$ is also bounded near $\partial M$ and thus, from the work of Cheeger, Gromov and Taylor \cite[Theorem 4.7]{cgt}, it is possible to check that $r_{\mbox{inj}}(M)$ is positive and bounded from below away from zero.  Thus, we conclude $\Phi$-manifolds have bounded geometry.

\subsection{Asymptotics of the heat kernel}

In order to derive mapping properties of the heat kernel $H$, we need to recall briefly the asymptotic structure of the heat kernel. We
refer the reader to \cite{VerTal} and \cite{giuseppebruno} for further details. Specifically, the heat kernel $H$ is 
a smooth function in the open interior of $\overline{M}^2 \times [0,\infty)_t$ with 
singular behavior at 
\begin{align*}
&\textup{FF}:=\partial M \times \partial M \times [0,\infty), \\
&\textup{FD}:=\{y=y'\} \times [0,\infty) \subset \textup{FF}, \\
&\textup{TD}:=\textup{diag} (\partial M \times \partial M) \times \{t=0\}.
\end{align*} 
This singular behavior is resolved by blowing up these singular submanifolds, 
i.e. replacing the submanifolds by their inward pointing normal bundles, glued 
into $M^2 \times (0,\infty)$ in a well-defined geometric way. 
The inward pointing normal bundles of (the lifts of) FF, FD, TD are then 
new boundary faces in the blowup space $\mathscr{M}^2_h$, referred to as
ff, fd and td, respectively. The blowup space $\mathscr{M}^2_h$ is illustrated in Figure \ref{thirdblowup}.  The regions in $\mathscr{M}^{2}_h$ identified below with numbers ranging from 1 to 5 are called ``regimes''.
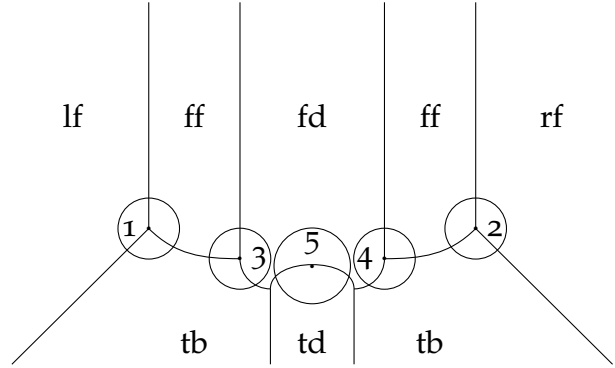
\begin{figure}[!htbp]
    \centering
    \begin{tikzpicture}
\draw (0,0) node[left]{1} to (0,3);
\draw (0,0) to (-1.8,-1.8);
\draw (0,0) node[circle,inner sep=7pt,draw]{.} to[out=315,in=180] (1.2,-0.4);
\draw (1.2,-0.4) node[right]{3} to (1.2,3);
\draw (1.2,-0.4) node[circle,inner sep=7pt,draw]{.} to[out=270,in=180] (1.6,-0.8);
\draw (1.6,-0.8) to (1.6,-1.8);
\draw (1.6,-0.8) to[out=90,in=90] (2.7,-0.8);
\draw (2.7,-0.8) to (2.7,-1.8);
\draw (2.7,-0.8) to[out=0,in=270] (3.1,-0.4);
\draw (3.1,-0.4) node[left]{4} to (3.1,3);
\draw (3.1,-0.4) node[circle,inner sep=7pt,draw]{.} to[out=0,in=225] (4.3,0);
\draw (4.3,0) node[circle,inner sep=7pt,draw]{.} to (4.3,3);
\draw (4.3,0) node[right]{2} to (6.1,-1.8);
\draw (2.15,-0.495) node[circle,inner sep=9pt,draw]{.};
\draw (2.15,-0.495) node[above]{5};
\draw (-1,1.5) node{lf};
\draw (0.6,1.5) node{ff};
\draw (2.15,1.5) node{fd};
\draw (3.7,1.5) node{ff};
\draw (5.3,1.5) node{rf};
\draw (0.6,-1.5) node{tb};
\draw (2.15,-1.5) node{td};
\draw (3.7,-1.5) node{tb};
\end{tikzpicture}
    \caption{The parabolic blowup space $\mathscr{M}^2_h$.}
    \label{thirdblowup}
\end{figure}

Local coordinates on the blowup space are best understood in terms of projective coordinates, written in terms of 
local coordinates $(x,y,z)$ and $(\widetilde{x}, \widetilde{y}, \widetilde{z})$ on the two copies of $M$. 
In the regime $1$ (coordinates in the regime $2$ are obtained by interchanging the roles of $x$ and $\widetilde{x}$)
we have the projective coordinates
\begin{equation}\label{lowerleftcorner}
    (x,y,z,\widetilde{s},\widetilde{y},\widetilde{z},\tau) := \left(x,y,z,\dfrac{\widetilde{x}}{x},\widetilde{y},\widetilde{z},\sqrt{t}\right).
\end{equation}
In these coordinates, the defining functions $\rho_{\lf}, \rho_{\ff}, \rho_{\tb}$ of the boundary faces
lf, ff and tb, are given by $\widetilde{s}, x$ and $\tau$, respectively. \medskip

In the regime $3$ (coordinates in the regime $4$ are obtained by interchanging the roles of $x$ and $\widetilde{x}$) 
we have the projective coordinates
\begin{equation}\label{coordinatesnearfd}
    \left(x,y,z,\mathcal{S}',\mathcal{U}',\mathcal{Z}',\tau\right):=
     \left(x,y,z,\dfrac{\widetilde{s}-1}{x},\dfrac{\widetilde{y}-y}{x},\widetilde{z}-z, \sqrt{t}\right).
\end{equation}
In these coordinates, the defining functions $\rho_{\ff}, \rho_{\fd}, \rho_{\tb}$ of the boundary faces
ff, fd and tb, are given by $\|(\mathcal{S}',\mathcal{U}')\|^{-1}, x$ and $\tau$, respectively. \medskip

The projective coordinates in the regime 5 are given by
\begin{equation}\label{coordinatesneartd}
    (x,y,z,\mathcal{S},\mathcal{U},\mathcal{Z}, \tau) 
    := \left(x,y,z,\dfrac{\mathcal{S}'}{\tau},\dfrac{\mathcal{U}'}{\tau},\dfrac{\mathcal{Z}'}{\tau}, \sqrt{t} \right). 
\end{equation}
In these coordinates, the defining functions $\rho_{\fd}, \rho_{\td}$ of the boundary faces
fd and td, are given by $x$ and $\tau$, respectively, while $\|(\mathcal{S},\mathcal{U},\mathcal{Z})\|\rightarrow \infty$ 
corresponds to $\tb$. Lifting $H$ to $\mathscr{M}^2_h$ corresponds in local coordinates simply 
to a change to projective coordinates \eqref{lowerleftcorner}, \eqref{coordinatesnearfd}
or \eqref{coordinatesneartd}. Now we can state the asymptotics of the heat kernel $H$.

\begin{thm}\cite[Theorem 7.2]{VerTal}\label{MohammadThm}
Let $(M,g_{\Phi})$ be an $m$-dimensional $\Phi$-manifold. Then the heat kernel $H$ 
lifts to a polyhomogeneous function $\beta^*H$ on the blowup space $\mathscr{M}^2_h$ 
with the following asymptotic behavior
\begin{equation}
\beta^*H \sim \rho_{\lf}^\infty \rho_{\ff}^\infty\rho_{\rf}^\infty\rho_{\tb}^\infty\rho_{\fd}^0\rho_{td}^{-m}G_{0}
\end{equation}
with $G_{0}$ being a bounded function.
\end{thm} 

\subsection{Mapping properties of the heat operator}

The mapping properties for the heat kernel $H$ proved in \cite[Theorem 1.1]{giuseppebruno} are stronger than those presented here, since there the authors actually study H\"older spaces
with respect to the distance of the incomplete metric $x^4g_\Phi$. The mapping properties here
are still sufficient for our purposes, and the proof follows along the lines of \cite{giuseppebruno}.

\begin{prop}\label{modified-mapping} 
The heat operator $H$ acting by convolution in time defines, for 
any $k\in \N_0$ and $\alpha \in (0,1)$, bounded linear mappings
\begin{equation}\begin{split}
&H:x^{\gamma}C^{k,\alpha}_{\Phi}(M\times [0,T]) \rightarrow x^{\gamma}C^{k+2,\alpha}_{\Phi}(M\times [0,T]), \\
&H:x^{\gamma}C^{k+1,\alpha}_{\Phi}(M\times [0,T]) \rightarrow \sqrt{t} \ x^{\gamma}C^{k+2,\alpha}_{\Phi}(M\times [0,T]).
\end{split}
\end{equation}
\end{prop}

\begin{proof} Note that the mapping properties above are equivalent to
\begin{equation}
    H_{\gamma}:= x^{-\gamma}Hx^{\gamma}:C^{k,\alpha}_{\Phi}(M\times [0,T])\rightarrow \Bigl(C^{k+2,\alpha}_{\Phi}\cap\;\sqrt{t}\;C^{k,\alpha}_{\Phi}\Bigr)(M\times [0,T])
\end{equation}
acting continuously.  Moreover, the kernel of $H_{\gamma}$ has the same asymptotic behavior of the heat kernel $H$ near the boundary hypersurfaces, which means we can work directly with $H_{\gamma}$.  We first discuss the proof for $k=0$.
The statement is then equivalent to boundedness of 
\begin{equation}\label{V}
    G = V \circ H_{\gamma}: 
   C^{\alpha}_{\Phi}(M\times [0,T])\rightarrow C^{\alpha}_{\Phi}(M\times [0,T]),
\end{equation}
where $V \in \mbox{Diff}^{l}_{\Phi}(M)$, for $l \in \{0,1,2\}$.  Moreover, it should be noted that, for estimates purposes, it is possible to consider $V$ to be simply a generator of $\mbox{Diff}^{l}_{\Phi}(M)$.
The key is to use the (local) H\"older norm in \eqref{eq.local-norm}, which means that the H\"older differences should be estimated only for
$$d(p,p')^{\alpha} + |t-t'|^{\alpha/2} \leq \delta.$$
Boundedness of $G$ is then established in the three following steps:
\medskip

 \begin{enumerate}
        \item Estimates for spacial difference: if $d(p,p')^{\alpha} \leq \delta$, then
        $$|G u(p,t) - G u(p',t)| \leq C \|u\|_{\alpha}d(p,p')^{\alpha},$$
        \item Estimates for time difference: $$|G u(p,t) - G u(p,t')| \leq C \|u\|_{\alpha}|t-t'|^{\alpha/2},$$
        \item Estimates for supremum norm: $$|G u(p,t)| \leq C \|u\|_{\alpha},$$
    \end{enumerate} for some uniform constants $C>0$ independent of $u$ and $(p,p',t,t')$.
    In fact, we will denote uniform positive constants always by $C$ and $c$, despite the 
    constants possibly being different from estimate to estimate. 
    \medskip

\noindent \textbf{Proof of (i):} Consider $p,p' \in M$ and write
\begin{align*}
    &M^{+}=\setdef{\tilde{p} \in M}{ \;d(p,\tilde{p})\leq 3d(p,p')}, \\
    &M^{-}=\setdef{\tilde{p} \in M}{\;d(p,\tilde{p}) \geq 3d(p,p')}.
\end{align*}
We shall assume that $p=(x,y,z)$ and $p'=(x',y,z)$, with $x'> x$ without loss of generality. The cases where $p$ and $p'$ differ in the $(y,z)$
components, are discussed similarly. We write out the estimate in the regime $5$ in Figure \ref{thirdblowup}, where 
fd meets td. The other regimes are simpler. \medskip

Below we use the mean value theorem with $p_\xi = (\xi, y, z)$ 
for some intermediate $\xi \in (x,x')$.  Moreover, stochastic completeness of $\Phi$-manifolds implies that,
by replacing $u(\tilde{p},\tilde{t})$ by $(u(\tilde{p},\tilde{t}) - u(p,\tilde{t}))$
 (writing $\tilde{p}= (\widetilde{x}, \widetilde{y}, \widetilde{z})$), we can write
\begin{align*}
        Gu(p,t) - &Gu(p',t) = \\ (x-x') &\int_{0}^{t}\int_{M^{-}}\partial_\xi G(t-\tilde{t},p_\xi,\tilde{p})
        \bigl( u(\tilde{p},\tilde{t}) - u(p,\tilde{t}) \bigr) 
        \dvol_{\Phi}(\tilde{p})\di \tilde{t} \\ 
         +\, &\int_{0}^{t}\int_{M^{+}}G(t-\tilde{t},p,\tilde{p})
         \bigl( u(\tilde{p},\tilde{t}) - u(p,\tilde{t}) \bigr) 
         \dvol_{\Phi}(\tilde{p})\di \tilde{t} \\
         -\, &\int_{0}^{t}\int_{M^{+}}G(t-\tilde{t},p',\tilde{p})
         \bigl( u(\tilde{p},\tilde{t}) - u(p',\tilde{t}) \bigr) 
         \dvol_{\Phi}(\tilde{p})\di \tilde{t} \\
       +\, &\int_{0}^{t}\int_{M^{+}}G(t-\tilde{t},p',\tilde{p})
         \bigl( u(p,\tilde{t}) - u(p',\tilde{t}) \bigr) 
         \dvol_{\Phi}(\tilde{p})\di \tilde{t} \\
       =&I_{1}+I_{2}-I_{3} +I_4,
\end{align*}

\subsection{Estimates for $I_1$}

From the H\"older continuity of $u$, we write 
\begin{align*}
|I_1| &\leq |x-x'| \cdot \|u\|_\alpha \int_{0}^{t}\int_{M^{-}}\partial_\xi G(t-\tilde{t},p_\xi,\tilde{p})
        \cdot d(\tilde{p},p)^\alpha \dvol_{\Phi}(\tilde{p})\di \tilde{t} \\
        &\leq |x-x'| \cdot  \|u\|_\alpha \int_{0}^{t}\int_{M^{-}}\partial_\xi G(t-\tilde{t},p_\xi,\tilde{p})
        \cdot d(\tilde{p},p_\xi)^\alpha \dvol_{\Phi}(\tilde{p})\di \tilde{t},
\end{align*}
where in the second estimate we used $d(\tilde{p},p) \leq 3d(\tilde{p},p_\xi)$,
obtained by exactly the same arguments as in \cite[(6.1)]{giuseppebruno}.
Since we estimate in the regime $5$ in Figure \ref{thirdblowup}, where 
fd meets td, we use the local projective coordinates
$(\tau,\xi,y,z,\mathcal{S}',\mathcal{U}',\mathcal{Z}')$, introduced in \eqref{coordinatesnearfd}, where 
\[\mathcal{S}' = \dfrac{\widetilde{x}-\xi}{\xi^{2}}, \hspace{2mm} \mathcal{U}' = \dfrac{\widetilde{y}-y}{\xi}, \hspace{2mm} \mathcal{Z}' = \widetilde{z}-z \hspace{2mm} \mbox{and} \hspace{2mm} \tau = \sqrt{t - \widetilde{t}}.\]
Then we compute from Theorem \ref{MohammadThm} and $\dvol_\Phi(\widetilde{x},\widetilde{y},\widetilde{z}) \sim 
\widetilde{x}^{-2-b} \di\widetilde{x} \di\widetilde{y} \di\widetilde{z}$
\begin{align*}
|I_1|\le c \cdot \frac{|x-x'|}{\xi^2} \cdot \|u\|_\alpha  \int_{0}^{\sqrt{t}}\int_{M^-}\tau^{-m-2} G_{0}
 \sqrt{ |\mathcal{S}'|^2 + \|\mathcal{U}'\|^2 + \|\mathcal{Z}'\|^2 }^{\, \alpha} \\ \di\mathcal{S}'\di\mathcal{U}'\di\mathcal{Z}'\di\tau,
\end{align*}
with $G_{0}$ being bounded and vanishing to infinite order as $\|(\mathcal{S},\mathcal{U},\mathcal{Z})\| \to \infty$,
where $(\mathcal{S},\mathcal{U},\mathcal{Z}) = (\mathcal{S}'/\tau,\mathcal{U}'/\tau,\mathcal{Z}'/\tau)$.
Let us define
$$r(\mathcal{S}',\mathcal{U}',\mathcal{Z}') := \sqrt{ |\mathcal{S}'|^2 + \|\mathcal{U}'\|^2 + \|\mathcal{Z}'\|^2 }.$$
Such a function $r$ describes the radial distance in polar coordinates around the origin.  
Performing a change of coordinates, we obtain
$$|I_1|\le c \cdot \frac{|x-x'|}{\xi^2} \cdot \|u\|_\alpha 
\int_{0}^{\sqrt{t}}\int_{M^{-}} \tau^{-m-2}r^{m-1+\alpha}G_{0}\di r\di \mbox{(angle)}\di\tau. $$
Now, setting $\sigma=r^{-1}\tau =  \sqrt{ |\mathcal{S}|^2 + \|\mathcal{U}\|^2 + \|\mathcal{Z}\|^2 }^{-1}$, it follows that $G_{0}$ against any negative power of $\sigma$ is bounded. 
Hence, integrating out the angular variables, followed by another change of coordinates $\tau\mapsto\sigma$ gives
\begin{align*}
|I_1| &\le c \cdot \frac{|x-x'|}{\xi^2} \cdot \|u\|_\alpha \int_{0}^{\sqrt{t}}\int_{M^{-}} r^{-2+\alpha}\di r.
\end{align*}
Now, exactly as in \cite[(6.3)]{giuseppebruno} we find $M^{-} \subset \{d(p,p') \le cr\}$ for some constant $c>0$. 
Thus we can estimate even further
\begin{equation}\label{I1}\begin{split}
|I_1|&\le c \cdot \frac{|x-x'|}{\xi^2} \cdot \|u\|_\alpha \int_{c\frac{d(p,p')}{\xi^2}}^\infty r^{-2+\alpha}\di r\\
&=c \cdot \frac{|x-x'|}{\xi^2} \cdot d(p,p')^{-1+\alpha} \|u\|_\alpha.
\end{split}\end{equation}
In order to conclude the desired estimate of $I_1$, 
recall from Lemma \ref{local-norm}, that we may consider only $d(p,p') \leq \delta^{1/\alpha} =: \rho$, 
with any positive $\rho < 1/4$. Then $$1-\frac{x}{x'} \leq 2 \rho (x+x') \leq 4\rho.$$
Thus $x > (1-4\delta) x'$. Hence we may estimate
\begin{align*}
\frac{|x-x'|}{\xi^2} &\leq \frac{|x-x'|}{x^2} 
\leq (1-4\rho) ^{-2} \frac{|x-x'|}{x'^2} \\ &\leq 4(1-4\rho) ^{-2} \frac{|x-x'|}{(x+x')^2} 
\leq 4(1-4\rho) ^{-2} d(p,p').
\end{align*}
Thus for $\delta>0$ sufficiently small, we conclude from \eqref{I1} and the last estimate above
\begin{align*}
|I_1|\le c \cdot d(p,p')^{\alpha} \|u\|_\alpha.
\end{align*}
\subsection{Estimates for $I_2, I_3$}
Similar estimates as above lead to
\begin{align*}
|I_2|, |I_3| \le c \cdot \|u\|_\alpha \int_0^{c d(p,p')} r^{-1+\alpha}\di r
 \leq c \|u\|_\alpha d(p,p')^{\alpha},
\end{align*}
implying both estimates.

\subsection{Estimates for $I_4$}
For the estimate of $I_{4}$, we assume again as before that the heat kernel 
is supported near fd meeting td, and thus work with local projective coordinates 
$(\tau,x',y',z',\mathcal{S},\mathcal{U},\mathcal{Z})$ given in \eqref{coordinatesneartd}, that is,
\begin{equation*}
    \mathcal{S} = \dfrac{\widetilde{x}-x'}{x'^{2}\tau},\;\; \mathcal{U} = \dfrac{\widetilde{y}-y'}{x'\tau},\;\; 
    \mathcal{Z} = \dfrac{\widetilde{z}-z'}{\tau}\;\;\mbox{and}\;\; \tau = \sqrt{t-\widetilde{t}}.
\end{equation*}
We will obtain the estimates using integration by parts. To do so, 
note that one has (as the "worst case scenario" with $V \in \mbox{Diff}^{2}_{\Phi}(M)$) 
$G = \tau^{-m-2}(X_{1}X_{2}H)$ with both $X_{1},X_{2} \in 
\{\partial_{\mathcal{S}},\partial_{\mathcal{U}},\partial_{\mathcal{Z}}\}$. 
For the sake of simplicity, we shall assume $X_{1} = \partial_{\mathcal{S}}$. On the other hand, one has 
by triangle inequality
\begin{equation}\label{partialM}\begin{split}
\partial M^{+} &= \Bigl\{ d\bigl((x,y,z),(\widetilde{x},\widetilde{x},\widetilde{x})\bigr) = 3 d\bigl((x,y,z),(x',y',z')\bigr)\Bigr\} \\
&\subseteq \Bigl\{ 2 d\bigl((x,y,z),(x',y',z') \leq d\bigl((x',y',z'),(\widetilde{x},\widetilde{x},\widetilde{x})\bigr) \Bigr\}.
\end{split}\end{equation}
Moreover we can also write for some smooth function $\ell$
\[\beta^{*}(\dvol_{\Phi}(\widetilde{p})\di \widetilde{t}) = \ell \bigl(x'+\tau \, x'^{2}\mathcal{S},y'+\tau \, x' \mathcal{U},z'+\tau \, \mathcal{Z}\bigr) \di 
\mathcal{S} \di \mathcal{U} \di \mathcal{Z} \di \tau,\]
Since $u(p,\widetilde{t})- u(p',\widetilde{t}) =: \delta u$ is independent of $\widetilde{p}$, we can integrate by parts
\begin{align*}
I_4 &= \int_{0}^{\sqrt{t}}\delta u\int_{M^{+}} \tau^{-1}(\partial_{\mathcal{S}}X_{2} H) \ell \di \mathcal{S} \di \mathcal{U}\di \mathcal{Z} \di \tau\\
&= \int_{0}^{\sqrt{t}}\delta u\int_{\partial M^{+}} \tau^{-1}(X_{2} H) \ell \di \mathcal{U}\di \mathcal{Z} \di \tau\\
&\hspace{4mm} - \int_{0}^{\sqrt{t}}\delta u\int_{ M^{+}} \tau^{-1}(X_{2} H)\partial_{\mathcal{S}} \ell \di \mathcal{S} \di \mathcal{U}\di \mathcal{Z} \di \tau
=: I^{1}_4 - I^{2}_4.
\end{align*}
For the $I^{2}_4$-term, note that $\ell$ is a smooth function and therefore $\partial_{\mathcal{S}} \ell = \tau \, x'^{2} \partial_{\widetilde{x}} \ell$.
This cancels the $\tau^{-1}$ in the integrand and thus $I^{2}_4$ can be estimated against $\|u\|_\alpha d(p,p')^{\alpha}$. 
For the $I^{1}_4$-term, note by \eqref{partialM} that we can estimate
\begin{align*}
|I^1_4| \leq \|u\|_\alpha d(p,p')^{\alpha} \int_{\partial M^{+}} \tau^{-1} \bigl( X_{2}H \bigr)
\leq \frac{1}{2} \, \|u\|_\alpha \int_{\partial M^{+}} \tau^{-1} \bigl( X_{2}H \bigr) d(p',\widetilde{p})^{\alpha}
\end{align*}
This can now be estimated exactly as $I_2, I_3$,
completing the proof for (i). \bigskip
    
\noindent \textbf{Proof of (ii):} For time difference, first assume $t'<t$ (without loss of generality) and suppose first $t\leq 2t'$.  
Let us consider the case where $V$ in \eqref{V} is a first or second order $\Phi$-derivative, so that we can apply 
stochastic completeness. Then we find by the mean value theorem for some intermediate $\theta \in (t',t)$
\begin{align*}
    Gu(p,t) - &Gu(p,t') = \\ &|t-t'|\int_{T_{-}}\int_{M} \partial_{\theta}G(\theta-\tilde{t},p,\tilde{p})
    \bigl( u(\tilde{p},\tilde{t}) - u(p,\tilde{t}) \bigr) \dvol_{\Phi}(\tilde{p})\di \tilde{t} \vspace{1mm}\\
&+ \int_{T_{+}}\int_{M} G(t-\tilde{t},p,\tilde{p}) \bigl( u(\tilde{p},\tilde{t}) - u(p,\tilde{t}) \bigr)
\dvol_{\Phi}(\tilde{p})\di \tilde{t} \\
&- \int_{T'_{+}}\int_{M} G(t'-\tilde{t},p,\tilde{p})\bigl( u(\tilde{p},\tilde{t}) - u(p,\tilde{t}) \bigr)
\dvol_{\Phi}(\tilde{p})\di \tilde{t} \\
=: &L_{1} + L_{2} - L_{3},
    \end{align*}
with the subsets $T_{-}, T_{+}$ and $T'_{+}$ defined as follows:
\begin{equation}
    T_{-}=[0,2t'-t], \;\; T_{+}=[2t'-t,t] \;\; \mbox{and} \;\; T'_{+}=[2t'-t,t'].
\end{equation}
If $V$ is identity, the estimates follow similar to those of $L_1$ with $T_-$ replaced by $[0,T]$.
Using H\"older continuity of $u$ we obtain by Theorem \ref{MohammadThm} in 
projective coordinates \eqref{coordinatesneartd}
\begin{equation*}
    |L_{1}| \leq C |t-t'| \|u\|_\alpha \int_{T_{-}} \tau^{-3+\alpha}, \quad
    |L_{2}|, |L_{3}| \leq C \|u\|_\alpha \int_{T_{+}} \tau^{-1+\alpha}, 
\end{equation*}
where $\tau = \sqrt{\theta - \tilde{t}}$ in the first integral, and 
$\tau = \sqrt{t - \tilde{t}}$ in the second. Note that for $\tilde{t} \in T_-$ we have $(\theta - \tilde{t}) \geq (t-\tilde{t})$.
From there we conclude immediately the statement (ii). \bigskip

\noindent \textbf{Proof of (iii):} Let us consider the case where 
$V$ in \eqref{V} is a first or second order $\Phi$-derivative, so that we can apply 
stochastic completeness. Then 
\begin{align*}
    |Gu(p,t)| \leq \int_0^T \int_{M} G(t-\tilde{t},p,\tilde{p})| u(\tilde{p},\tilde{t}) - u(p,\tilde{t})| \dvol_{\Phi}(\tilde{p})\di \tilde{t}. 
\end{align*}
Using H\"older continuity of $u$ we obtain by Theorem \ref{MohammadThm} in 
projective coordinates \eqref{coordinatesneartd}
\begin{equation*}
    |Gu(p,t)| \leq  C \|u\|_\alpha  \int \tau^{-1+\alpha} \leq C \|u\|_\alpha.
\end{equation*}
If $V$ is identity, the estimate follows along the same lines without the stochastic completeness trick.  
This completes the proof of the statement for $k=0$. For general $k$, in all of the above integrals we can 
first pass $k$ $\Phi$-derivatives to the function $u$ using integration by parts in $(\mathcal{S},\mathcal{U}, \mathcal{Z})$
and then continue as before in case $k=0$.
\end{proof}

Once we proved the mapping properties in Proposition \ref{modified-mapping}, one can linearize the Yamabe flow equation \eqref{transformed2} -- as described in \S \ref{shorttime} -- by considering $u = 1 + v$, where $v \in x^{\gamma}C^{k,\alpha}_{\Phi}(M\times [0,T])$.  Moreover, since $(M,g_{\Phi})$ has bounded geometry, the mapping properties for the operators $F_{1}$ and $F_{2}$, from Lemmas \ref{lemmaf2} and \ref{lemmaf1}, still hold for weighted H\"older spaces and thus, the contraction argument can be used once again to prove short-time existence of the Yamabe flow on $\Phi$-manifolds with a linearizing factor lying on $x^{\gamma}C^{k,\alpha}_{\Phi}(M\times [0,T])$, which does not hold for a general manifold with bounded geometry.


\begin{thebibliography}{99}

\bibitem[\textsc{AlOw88}]{alvesowen}
\textsc{P.~Alves} and \textsc{R.~C. Owen}, \emph{Conformal deformation to
	constant negative scalar curvature on non-compact Riemannian manifolds},
Journal of Differential Geometry \textbf{27} (1988), 225--239.

\bibitem[\textsc{AMR16}]{aliasrigoli}
\textsc{L.~J. Al\'{i}as}, \textsc{P.~Mastrolia} and \textsc{M.~Rigoli},
\emph{Maximum principles and geometric applications}, Springer Monographs in Mathematics (2016).

\bibitem[\textsc{Aub76}]{Aubin}
\textsc{T.~Aubin}, \emph{\'{E}quations diff\'erentielles non lin\'eaires et
	probl\`eme de {Y}amabe concernant la courbure scalaire}, J. Math. Pures Appl.
(9) \textbf{55} (1976), no.~3, pp. 269--296.

\bibitem[\textsc{BaVe14}]{bahuaud2014yamabe}
\textsc{E.~Bahuaud} and \textsc{B.~Vertman}, \emph{Yamabe flow on manifolds
	with edges}, Mathematische Nachrichten \textbf{287} (2014), no.~2-3,
127--159.

\bibitem[\textsc{BaVe19}]{bahuaud2019long}
\bysame, \emph{Long-time existence of the edge yamabe flow}, Journal of the
Mathematical Society of Japan \textbf{71} (2019), no.~2, 651--688.


\bibitem[\textsc{Bre05}]{Brendle}
\textsc{S.~Brendle}, \emph{Convergence of the Yamabe Flow for Arbitrary Initial Energy}. 
Journal of Differential Geometry, 2005, Volume 69, Number 2, pp. 217--278.

\bibitem[\textsc{Bre07}]{BrendleYF}
\textsc{S.~Brendle}, \emph{Convergence of the {Y}amabe flow in dimension 6 and higher}, 
Invent. Math. \textbf{170} (2007), no.~3, pp. 541--576. \MR{2357502}

\bibitem[\textsc{CaGe22}]{giuseppebruno}
\textsc{B.~Caldeira} and \textsc{G.~Gentile}, \emph{Heat-type equations on manifolds with fibered boundary I: Schauder estimates}, preprint, arXiv:2203.15523 [math.AP] (2022).

\bibitem[\textsc{CGT82}]{cgt}
\textsc{J.~Cheeger}, \textsc{M.~Gromov} and \textsc{M.~Taylor}, \emph{Finite propagation speed, kernel estimates for functions of the Laplace operator, and the geometry of complete Riemannian manifolds}. J. Differential Geometry \textbf{17} (1982), no. 1, 15--53. MR0658471. 

\bibitem[GrHu09]{grieserhuns1}\textsc{D.~Grieser} and \textsc{E.~Hunsicker}, \emph{Pseudodifferential operator calculus for generalized $\mathbb Q$-rank 1 locally symmetric spaces}. I. J. Funct. Anal. \textbf{257} (2009), no. 12, 3748--3801. MR2557724.

\bibitem[GrHu09]{grieserhuns2}\textsc{D.~Grieser} and \textsc{E.~Hunsicker}, \emph{A parametrix construction for the Laplacian on $\mathbb Q$-rank 1 locally symmetric spaces}. Fourier Analysis. Birkh\"auser, Cham (2014), 149--186. MR3362019.

\bibitem[\textsc{Gro13}]{nadine}
\textsc{N.~Grosse}, \emph{The Yamabe equation on manifolds of bounded
	geometry}, Communications in Analysis and Geometry \textbf{21} (2013),
957--978.

\bibitem[\textsc{Guo19}]{wei}
\textsc{W.~Guodong}, \emph{Yamabe equation on some complete non-compact
	manifolds}, Pacific Journal of Mathematics \textbf{302} (2019), 717--739.

\bibitem[\textsc{Ham89}]{hamilton}
\textsc{R.~S. Hamilton}, \emph{Lectures on geometric flows}, unpublished
(1989).

\bibitem[HHM04]{hhm}
\textsc{T.~Hausel}, \textsc{E.~Hunsicker} and \textsc{R.~Mazzeo}, \emph{Hodge cohomology of gravitational instantons}. Duke Math. J. \textbf{122} (2004), no. 3, 485--548. MR2057017.

\bibitem[\textsc{IMS13}]{IMS}
\textsc{J. Isenberg}, \textsc{R. Mazzeo} and \textsc{N. Sesum}, \emph{Ricci flow on asymptotically conical surfaces with nontrivial topology} 
J. reine angew. Math., \textbf{676} (2013), 227-248.

\bibitem[\textsc{Kry96}]{krylov}
\textsc{N. V. Krylov}, \emph{Lectures on elliptic and parabolic equations in H\"older spaces} 
Graduate Studies in Mathematics, \textbf{12}. American Mathematical Society, Providence, RI, (1996).

\bibitem[\textsc{KrSa80}]{KS}
\textsc{N. V. Krylov} and \textsc{M. V. Safonov}, \emph{A property of the solutions of parabolic equations with measurable coefficients}, 
(Russian) Izv. Akad. Nauk SSSR Ser. Mat. \textbf{44} (1980), no. 1, 161--175, 239.

\bibitem[\textsc{LSU67}]{Lad}
\textsc{O.~A. Lady{\v{z}}enskaja, V.~A. Solonnikov} and \textsc{N.~N. Ural'ceva},
\emph{Linear and quasilinear equations of parabolic type}, Translated from
the Russian by S. Smith. Translations of Mathematical Monographs, Vol. \textbf{23},
American Mathematical Society, Providence, R.I., (1967).

\bibitem[LMP06]{lmp}
\textsc{E.~Leichtnam}, \textsc{R.~Mazzeo} and \textsc{P.~Piazza}, \emph{The index of Dirac operators on manifolds with fibered boundaries}. Bull. Belg. Math. Soc. Simon Stevin \textbf{13} (2006), no. 5, 845--855. MR2293212.

\bibitem[\textsc{LiCh14}]{macheng}
\textsc{L.~Ma} and \textsc{L.~Cheng}, \emph{Yamabe flow and Myers type theorem on complete manifolds}. J. Geom. Anal. \textbf{24} (2014), no. 1, 246--270. MR3145924.

\bibitem[Li19]{lima19}
\textsc{L.~Ma}, \emph{Yamabe flow and metrics of constant scalar curvature on a complete manifold}. Calc. Var. Partial Differential Equations \textbf{58} (2019), no. 1, Paper No. 30, 16 pp. MR3895773.

\bibitem[\textsc{Li21}]{lima}
\textsc{L.~Ma}, \emph{Global Yamabe flow on asymptotically flat manifolds}, preprint, arXiv:2102.02399 [math.DG] (2021).

\bibitem[\textsc{LCZ12}]{mcz}
\textsc{L.~Ma}, \textsc{L.~Cheng} and \textsc{A.~Zhu}, \emph{Extending
	Yamabe flow on complete Riemannian manifolds}, Bulletin des Sciences
Math\'{e}matiques \textbf{136} (2012), 882--891.


\bibitem[\textsc{MaMe98}]{mazzeo1998pseudodifferential}
\textsc{R.~Mazzeo} and \textsc{R.~B. Melrose}, \emph{Pseudodifferential
	operators on manifolds with fibred boundaries}, Mikio Sato: a great Japanese mathematician of the twentieth century. Asian J. Math. \textbf{4}, vol. 2 (1998): 833-866.
	
\bibitem[\textsc{Mel93}]{bookmelrose}
\textsc{R.~B.~Melrose}, \emph{The {A}tiyah-{P}atodi-{S}inger index theorem}, Research Notes in Mathematics, 4. A K Peters, Ltd., Wellesley, MA (1993). ISBN: 1-56881-002-4 MR1348401.

\bibitem[\textsc{O'N83}]{oneill}
\textsc{B.~O'Neill}, \emph{Semi-Riemannian geometry with applications to
	relativity}, Academic Press, 1983.

\bibitem[\textsc{Pic19}]{picard}
\textsc{S.~Picard}, \emph{Notes on H\"older Estimates for Parabolic PDE}
lecture notes, available online in http://people.math.harvard.edu/~spicard/notes-parabolicpde.pdf

\bibitem[\textsc{Sch19}]{schulzpaper}
\textsc{M.~B. Schulz}, \emph{Yamabe flow on non-compact manifolds with
	unbounded initial curvature}, The Journal of Geometric Analysis \textbf{30}
(2019), 4178--4192.


\bibitem[\textsc{Sch84}]{Schoen}
\textsc{R.~Schoen}, \emph{Conformal deformation of a {R}iemannian metric to
	constant scalar curvature}, J. Differential Geom. \textbf{20} (1984), no.~2, pp.
479--495.


\bibitem[\textsc{ScSt03}]{SS}
\textsc{H.~Schwetlick} and \textsc{M.~Struwe}, \emph{Convergence of the 
	{Y}amabe flow for ``large'' energies}, J. Reine Angew. Math. 
\textbf{562} (2003), pp. 59--100. \MR{2011332 (2004h:53097)}


\bibitem[\textsc{SSTa11}]{serratotapie}
\textsc{P.~Su\'{a}rez-Serrato} and \textsc{S.~Tapie}, \emph{Conformal entropy
	rigidity through Yamabe flow}, Mathematische Annalen \textbf{353} (2011),
333--357.

\bibitem[\textsc{Swa64}]{swan}
\textsc{R.~G. Swan}, \emph{Vector bundles and projective modules}, Matematika
\textbf{8} (1964), 29--44.

\bibitem[\textsc{TaVe21}]{VerTal}
\textsc{M.~Talebi} and \textsc{B.~Vertman}, \emph{Spectral geometry on
	manifolds with fibred boundary metrics ii: heat kernel asymptotics}, preprint, arXiv:2101.08844 [math.DG] (2021).


\bibitem[\textsc{Tru68}]{Trudinger}
\textsc{N.~S.~Trudinger}, \emph{Remarks concerning the conformal deformation of
	{R}iemannian structures on compact manifolds}, Ann. Scuola Norm. Sup. Pisa
(3) \textbf{22} (1968), pp. 265--274.

\bibitem[\textsc{Yam60}]{Yamabe}
\textsc{H.~Yamabe}, \emph{On a deformation of {R}iemannian structures on compact
	manifolds}, Osaka Math. J. \textbf{12} (1960), pp. 21--37.


\bibitem[\textsc{Ye94}]{ye}
\textsc{R.~Ye}, \emph{Global existence and convergence of Yamabe flow}, J.
Differential Geometry \textbf{39} (1994), 35--50.

\end{thebibliography}
\end{document}